\newcounter{mycount}
\theoremstyle{plain}
\newtheorem{theorem}[mycount]{Theorem}
\newtheorem{lemma}[mycount]{Lemma}
\newtheorem{proposition}[mycount]{Proposition}
\newtheorem{remark}{Remark}
\newtheorem{definition}{Definition}
\begin{document}
\title{Expected number of pattern and submap occurrences\\ in random planar maps}
\author{
    Guan-Ru Yu\\
    Faculty of Mathematics\\
    University of Vienna\\
    Vienna, 1090\\
    Austria}
\maketitle

\begin{abstract}
Drmota and Stufler proved recently that the expected number of pattern occurrences of a given map is asymptotically linear when the number of edges goes to infinity. In this paper we improve their result by means of a different method. Our method allows us to develop a systematic way for computing the explicit constant of the linear (main) term and shows that it is a positive rational number. Moreover, by extending our method, we also solve the corresponding problem of submap occurrences.
\end{abstract}

\section{Introduction and main results}

Within the last 20 years, there has been a recovering interest in planar maps which started mainly due to the work by Schaeffer \cite{GS1998} and culminated in the identification of the Brownian map as the scaling limit of large planar maps \cite{BJM2014,JFLG2013,GM2013}. 
Recently, there has also been a growing interest in local convergence (for example, in the case of quadrangulations  \cite{AS2003,MK2005}).
In particular, Stephenson \cite{RS2018} showed, with the help of the  Bouttier--Di Francesco--Guitter bijection \cite{BFG2004}, that random planar maps converge locally to a {\it uniform infinite planar map} (UIPM) $M_\infty$. 
This means that for every $\delta\ge 1$ and for every planar map $\widehat{m}$ of radius $\delta$ (measured from the root vertex) we have
\[
\lim_{n\to\infty} \mathbb{P}\{ U_\delta(M_n) = \widehat{m}\} =   \mathbb{P}\{ U_\delta(M_\infty) = \widehat{m}\},
\]
where $U_\delta(M_n)$ denotes the $\delta$-neighborhood of the root in
a planar map $M_n$ with $n$ edges.
The limiting object UIPM is only described with the help of a proper adaption of the Bouttier-Di Francesco-Guitter bijection to infinite trees (with a spine), see \cite{RS2018}.

Motivated by these results, we present some explicit results on the local convergence in Section~\ref{LPP}, which are based on a combinatorial approach. 
They do not give a full picture but complement the results from  \cite{RS2018} and constitute a direct and explicit analysis that can be also generalized to situations, where the approach from \cite{RS2018} cannot be applied, for example, to 2-connected maps.

On the other hand, instead of local problems,
Drmota and Stufler \cite{DS2020} considered planar maps adjusted with a (regular critical) Boltzmann distribution and showed that the expected number of  
the random number $t(\widehat{m},M_n)$ of occurrences of some planar map $\widehat{m}$
as a pattern in a random planar map (with $n$ edges) $M_n$ 
is asymptotically linear when $n$ goes to infinity, i.e.,
\[
\mathbb{E}[t(\widehat{m},M_n)]\sim c(\widehat{m})\cdot n \, ,
\]
for some constant $c(\widehat{m})>0$. Their proof is based on rerooting and is an extension of a formula by Liskovets \cite{VAL1999}.

The main purpose of this paper is to compute the constant $c(\widehat{m})$ of the result above and, moreover,
to show that it is a positive rational number. 
An exciting thing is that we can solve this 
with the help of our local convergence results which we mentioned before.
Moreover, another surprise is that we can use the same idea to solve
the corresponding problem of submap occurrences, too.

We now give a more detailed introduction and some definitions before we present our main results.
The study of planar maps goes back to Tutte \cite{WTT1963}.
A {\em planar map} is a connected planar graph (with loops and multiple edges allowed)
embedded in the plane. A map is {\em rooted} if one of its edges is distinguished and
directed. We call this oriented edge the {\em root edge}, the starting vertex of the
root edge the {\em root vertex} and the face to the left/right of the root edge is called the
{\em root face}/{\em second face}.
Throughout the paper, all the considered maps are rooted and planar, and their root faces are considered as outer faces. Moreover, we consider two maps isomorphic if they are isomorphic in the graph-theoretical sense after being embedded on the sphere.

When it comes to pattern occurrences in maps, the study can be classified into two categories.
One of them refers to local problems concerning pattern occurrences at the root,
such as determining the probability of a pattern to occur at the root of a random map. 
The other one is global which deals with patterns occurring in the whole map. Here the main goal is to find the distribution of the number of occurrences of a given pattern.

An {\em inner-face} of a map $M$ is a face in $M$ but not its root face. We denote by $F^*(M)$ the set of
all inner-faces of $M$.

\begin{definition}[pattern]\label{defpp}
A map $P$ occurs as a {\color{red} pattern at the root} of a map $M$ if
the vertex set $V(P)$ is a subset of the vertex set $V(M)$, 
the edge set $E(P)$ is a subset of the edge set $E(M)$,
the inner-face set $F^*(P)$ is a subset of the the inner-face set $F^*(M)$, 
and the root edge of $P$ is the root edge of $M$.
A map $P$ occurs as a {\color{red} pattern} in a map $M$ if
$V(P)\subseteq V(M)$, 
$E(P)\subseteq E(M)$,
and $F^*(P)\subseteq F^*(M)$.
\end{definition}
 
The conditions between local and global occurrence are almost the same,
the only difference is that the global one unties the location of the root edge,
see examples in Section~\ref{POPT}.

Our first main result is an improvement of the result by Drmota and Stufler from~\cite{DS2020}, 
where it was proved that that the mean number of pattern occurrences is asymptotically linear in the size of a map.

\begin{theorem}\label{thmpt}
Let $\widehat{m}$ be a given planar map and $n$ a positive integer. Then the expected number of occurrences of $\widehat{m}$ as a pattern in maps with $n$ edges is given by
\[
\mathbb{E}[t(\widehat{m},M_n)]= c_1\cdot n+c_2+O(1/n),
\]
where $c_1$ and $c_2$ are constants that depend on $\widehat{m}$ and, moreover, $c_1$ is a computable positive rational number.
\end{theorem}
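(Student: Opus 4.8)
The plan is to count pairs $(\widehat m', M_n)$ where $\widehat m'$ is an occurrence of $\widehat m$ as a pattern in $M_n$, and to reorganize this count according to the "environment" of the occurrence. Concretely, if $\widehat m$ has $k$ inner faces and $M_n$ contains $\widehat m$ as a pattern, then deleting (the interiors of) those $k$ faces from $M_n$ leaves a collection of "holes"; filling each hole with an arbitrary rooted map of the appropriate type recovers $M_n$. So I would set up a generating-function identity of the schematic form
\[
\sum_{n} \mathbb{E}[t(\widehat m, M_n)]\,[\text{number of maps with } n \text{ edges}]\, x^n
  \;=\; \sum_{\text{ways to embed } \widehat m} x^{\#E(\widehat m)} \prod_{\text{faces/holes}} (\text{map series filling that hole}),
\]
where the outer sum ranges over the finitely many combinatorial "patterns of attachment" of $\widehat m$ inside a larger map — this is where the local-convergence analysis of Section~\ref{LPP} enters, since occurrences at the root of the filling maps are exactly the local events analyzed there. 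Dividing by the generating function $M(x)$ for all rooted planar maps with $n$ edges gives $\mathbb{E}[t(\widehat m, M_n)]$ as the $n$-th coefficient of a ratio of explicit algebraic series.

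Next I would carry out the singularity analysis. The series for rooted planar maps counted by edges has a known square-root-type singularity at $x = 1/12$, with $M(x) \sim a - b\sqrt{1 - 12x} + \cdots$; the filling-hole series are of the same algebraic nature. The key point is that every factor appearing in the numerator is, after substitution, a rational function of $\sqrt{1-12x}$ with rational coefficients (because the relevant map series and their algebraic relations have rational coefficients). Hence the quotient has a singular expansion $c_0 + c_1'\sqrt{1-12x} + c_2'(1-12x) + c_3'(1-12x)^{3/2} + \cdots$ near $x = 1/12$, with all $c_i'$ rational. Transfer theorems (Flajolet–Odlyzko) then give that the $n$-th coefficient behaves like $A\cdot n + B + O(1/n)$: the integer/half-integer powers $(1-12x)^{j}$ with $j\ge 0$ contribute only to finitely many initial coefficients or to exponentially small tails, while the $(1-12x)^{1/2}$ and $(1-12x)^{3/2}$ terms produce the $n^{-3/2}12^n$ and $n^{-5/2}12^n$ asymptotics of the numerator; dividing by the corresponding asymptotics of $M(x)$'s coefficients (also of order $n^{-5/2}12^n$) yields precisely a linear term $c_1 n$, a constant term $c_2$, and an error $O(1/n)$. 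Tracking the constants through this division shows $c_1$ is rational, and positivity of $c_1$ follows because it is (up to a positive rational normalization) a sum of nonnegative contributions, at least one of which — e.g. the trivial attachment realizing $\widehat m$ with the rest of the map grown generically — is strictly positive.

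The main obstacle I anticipate is bookkeeping the "ways to embed $\widehat m$" correctly: one must enumerate, up to isomorphism and with the right automorphism/overcounting corrections, all combinatorial types of how $\widehat m$ can sit inside an ambient map, keeping track of which of its faces become genuine inner faces of $M_n$ versus being further subdivided, and which vertices/edges of $M_n$ outside $\widehat m$ attach where. Getting this decomposition both exhaustive and non-redundant — so that the product formula for the filling series is exact rather than merely asymptotic — is the technical heart; this is exactly the step the paper's local-convergence results are designed to make rigorous and explicit, and it is also what makes the constant $c_1$ effectively computable. A secondary, more routine obstacle is verifying uniformly that the error term is genuinely $O(1/n)$ and not merely $o(1)$, which requires controlling the $(1-12x)^{3/2}$ and $(1-12x)^2$ coefficients in the singular expansion of the quotient; this is a standard but slightly delicate application of the transfer theorems to a quotient of algebraic series.
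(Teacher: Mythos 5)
Your analytic endgame is sound and matches Section~\ref{PF} of the paper: expand the relevant counting series at $z=1/12$ in powers of $(1-12z)^{1/2}$ with rational coefficients, apply the transfer theorem, and divide by $m_n\sim \frac{2}{\sqrt{\pi}}n^{-5/2}12^n$; the $(1-12z)^{1/2}$ term of the numerator is indeed what produces $c_1 n$, and rationality of $c_1$ does follow from rationality of the singular coefficients. The gap is in the combinatorial half, which you yourself flag as ``the technical heart'' but do not resolve, and which you partly misdescribe. For a \emph{pattern} occurrence the inner-faces of $\widehat{m}$ are required to be faces of $M_n$, so their interiors contain nothing and there are no holes to fill inside $\widehat{m}$; the hole-filling decomposition you sketch is the right picture for the \emph{submap} problem (Proposition~\ref{soct}), not for this theorem. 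For patterns the only ``environment'' is the single region outside $\widehat{m}$, and the paper's Lemma~\ref{bij2} identifies it exactly: a map with $\widehat{m}$ (root face valency $\ell$, $k$ inner-edges, $s$ outer-edges) occurring at the root corresponds bijectively to a map whose second face is a \emph{pure} $\ell$-gon, giving $\mathbb{F}_{\widehat{m}}(z)=z^{k-s}F_{\ell}(z)$ with $F_{\ell}$ computable from $M(z,u)$ by the star-insertion formula of Lemma~\ref{lemfz}. The purity condition (all $\ell$ boundary vertices and edges distinct) and the $z^{k-s}$ shift accounting for bridges on the boundary are precisely the bookkeeping you were worried about; without them your ``product formula'' is not exact.

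The second missing idea is how the root edge of $M_n$ is located relative to the marked occurrence --- this is where the factor of $n$ comes from combinatorially, and your schematic identity is silent about it. The paper's Proposition~\ref{poct} supplies the double count: marking a directed edge whose left face is not an inner-face of $\widehat{m}$ gives $|\mathscr{R}_{\widehat{m}}|\,t_{\widehat{m},n}=(2n-|\mathscr{I}^{\Sigma}_{\widehat{m}}|)\,\tilde{f}_{\widehat{m},n}$, i.e.\ $\mathbb{T}_{\widehat{m}}(z)=|\mathscr{R}_{\widehat{m}}|^{-1}\bigl(2z\,\tfrac{d}{dz}\mathbb{F}_{\widehat{m}}(z)-|\mathscr{I}^{\Sigma}_{\widehat{m}}|\,\mathbb{F}_{\widehat{m}}(z)\bigr)$. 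The derivative is the analytic shadow of the $2n$ choices of secondary root, and the division by the number $|\mathscr{R}_{\widehat{m}}|$ of rotational self-isomorphisms is the automorphism correction you anticipated but did not carry out. Finally, your positivity argument (``a sum of nonnegative contributions, one strictly positive'') needs an actual witness; the paper gets it from the explicit lower bound $f_{\ell,n}\ge m_{n-\ell}$ of Lemma~\ref{posi1}. With these three ingredients supplied, your outline collapses onto the paper's proof; without them it remains a plan rather than a proof.
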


This result is obtained from our explicit local results in Section~\ref{LPP} and
can also be extended to the corresponding problem of submap occurrences.

We say the face $f_1$ is {\em part of} the face $f_2$  
if $f_2$ is divided by some edges and vertices into several parts, and $f_1$ is chosen from one of them.

\begin{definition}[submap]
A map $S$ occurs as a {\color{red} submap at the root} of a map $M$ if
$V(S)\subseteq V(M)$, 
$E(S)\subseteq E(M)$,
and the root edge of $S$ is the root edge of $M$.
A map $S$ occurs as a {\color{red} submap} in a map $M$ if
$V(S)\subseteq V(M)$,
$E(S)\subseteq E(M)$,
and the root face of $M$ is not (or is not part of) the inner-face of $S$.
\end{definition}

Here we need to mention that Gao \cite{BGR1992, GW2003} uses the notion ``submap''  differently in his series of papers. More precisely, his definition of submaps is closer to our definition of patterns. 

Examples of submaps can be found in Section~\ref{SOPT}. The following is our second main result:

\begin{theorem}\label{thmsm}
Let $\widehat{m}$ be a given planar map and $n$ a positive integer. Then the expected number of occurrences of $\widehat{m}$ as a submap in maps with $n$ edges is given by
\[
\mathbb{E}[s(\widehat{m},M_n)]= c'_1\cdot n+c'_2+O(1/n),
\]
where $c'_1$ and $c'_2$ are constants that depend on $\widehat{m}$ and $c'_1$ is a computable positive rational number.
\end{theorem}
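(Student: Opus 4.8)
The strategy is to reduce the submap case to the pattern case, which is already handled by Theorem~\ref{thmpt} and the local convergence analysis of Section~\ref{LPP}. The key observation is that $S$ occurs as a submap in $M$ precisely when there is some map $P$ obtained from $S$ by adding a (possibly empty) collection of edges and vertices inside the faces of $S$ that are ``filled in'' in $M$, so that $P$ occurs as a pattern in $M$. More carefully, fixing an occurrence of the vertex/edge set of $S$, the remaining structure of $M$ sitting on those vertices, together with whatever lies strictly inside the non-root faces of $S$, determines a finite ``closure'' pattern. Since each occurrence of $S$ as a submap extends the occurrence data of $S$ by finitely many cells only up to bounded radius, the relevant enlargements $P$ form a set that, while infinite, is organized by how many extra faces/edges are packed in; summing over them is controlled by the same Boltzmann-weighted generating function machinery.

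First I would set up the bijective/combinatorial dictionary: express $s(\widehat m, M)$ as a sum $\sum_{P} a_{P}\, t(P, M)$, where $P$ ranges over maps containing $\widehat m$ in the appropriate ``submap-to-pattern'' sense and $a_P$ are nonnegative integer multiplicities (accounting for the number of ways the faces of $\widehat m$ can be subdivided to yield $P$, with inclusion–exclusion signs if one works with ``exactly filled'' versus ``at least filled''). Second, I would take expectations and invoke Theorem~\ref{thmpt} termwise: $\mathbb{E}[t(P,M_n)] = c_1(P)\, n + c_2(P) + O(1/n)$. Third — and this is where care is needed — I would justify interchanging the (infinite) sum over $P$ with the limit in $n$, i.e. prove that $\sum_P a_P\, c_1(P)$ and $\sum_P a_P\, c_2(P)$ converge and that the $O(1/n)$ error terms are uniformly summable. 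This is exactly the point where the regular-critical Boltzmann distribution pays off: the probability that a random map has a given pattern $P$ at a fixed location decays geometrically in the size of $P$, because the Boltzmann generating function of maps has positive radius of convergence and the pattern-insertion operation costs a factor bounded by a constant to a power linear in $|E(P)|$. Fourth, I would identify $c_1' = \sum_P a_P\, c_1(P)$ and argue rationality: each $c_1(P)$ is a positive rational by Theorem~\ref{thmpt}, and the infinite sum telescopes or resums to a rational expression in the (algebraic, explicitly known) Boltzmann constants for maps — concretely, it can be written as the value at the critical point of a rational function of the relevant series, which is rational because the number-of-edges generating function for planar maps is algebraic with known rational critical values. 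Positivity is immediate since $\widehat m$ itself occurs with positive density (take $P = \widehat m$ with its faces unfilled, which has $a_P \ge 1$ and $c_1(\widehat m) > 0$).

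The main obstacle is the convergence/uniformity step (step three): one must show the tail $\sum_{|E(P)| > K} a_P\, \mathbb{E}[t(P,M_n)]$ is $o(n)$ uniformly in $n$, and likewise control the constant and error terms. The multiplicities $a_P$ can grow (there are many ways to triangulate a face), so the geometric decay of the pattern probabilities must be shown to beat this growth. I expect this to follow from a singularity-analysis estimate: the contribution of patterns of size $k$ to $\mathbb{E}[s(\widehat m, M_n)]$ is bounded by $n$ times the coefficient asymptotics of an auxiliary series whose radius of convergence strictly exceeds that of the map series, giving a bound of the form $C \rho^k n$ with $\rho < 1$. Once this domination is in place, dominated convergence delivers the asymptotic expansion with the claimed rational leading constant, and the $O(1/n)$ term is inherited from the uniform error control. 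A secondary, more bookkeeping-type obstacle is formalizing the ``part of a face'' condition in the submap definition so that the map $P\mapsto a_P$ correspondence is genuinely well-defined (no double counting of automorphic fillings), which I would handle by working with rooted, labeled fillings and dividing by symmetry at the end, exactly as in the pattern count of Section~\ref{POPT}.
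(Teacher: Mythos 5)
Your underlying combinatorial idea is the right one: a submap occurrence of $\widehat m$ is a pattern occurrence of some ``filling'' of the inner faces of $\widehat m$, and the contributions of all fillings must be aggregated. But the way you organize this aggregation --- as an infinite sum $\sum_P a_P\,t(P,M)$ over closure patterns, followed by termwise application of Theorem~\ref{thmpt} and an interchange of the sum with the limit $n\to\infty$ --- leaves the decisive step unproved. You would need the error term in $\mathbb{E}[t(P,M_n)]=c_1(P)n+c_2(P)+O(1/n)$ to be \emph{uniform} over the infinite family of fillings $P$ (with an explicit summable dependence on $|E(P)|$), and you would need to show that $\sum_P a_P c_1(P)$ and $\sum_P a_P c_2(P)$ converge and that the former is rational. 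You correctly identify this as the main obstacle, but you only sketch why it should hold (geometric decay of pattern probabilities versus growth of $a_P$), and the rationality of the resummed constant is asserted via vague appeal to ``rational critical values'' rather than proved. As written, the proof is not complete at its central step. (A smaller issue: the inclusion--exclusion signs you mention are unnecessary; the correspondence between submap occurrences and fillings is a direct product decomposition, not an at-least/exactly dichotomy.)

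The paper avoids all of these difficulties by packaging the sum over fillings as an exact, \emph{finite} product identity at the level of generating functions (Proposition~\ref{soct}): $\mathbb{S}_{\widehat m}(z)=\mathbb{T}_{\widehat m}(z)\cdot\prod_{i=1}^{|\mathscr{I}_{\widehat m}|}z^{-\Omega_i}F_{\Omega_i}(z)$, where each factor $z^{-\Omega_i}F_{\Omega_i}(z)$ is the generating function of all possible fillings of an inner face of valency $\Omega_i$ (maps whose root face is a pure $\Omega_i$-gon), counted by the number of inserted edges. This identity is exact for every $n$, so a single singular expansion of the product at $z=1/12$ delivers $\varrho_{\widehat m,1}=\tau_{\widehat m,1}\prod_i 12^{\Omega_i}\kappa_{\Omega_i,0}$ and the full expansion $c_1'n+c_2'+O(1/n)$ with no interchange of limits and no uniformity argument; your infinite resummation is precisely the evaluation $F_{\Omega_i}(1/12)=\kappa_{\Omega_i,0}$, whose rationality the paper establishes from the explicit algebraic form of $a_0(u)$ in (\ref{a0ueue}). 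Positivity of $c_1'$ is then immediate from $s_{\widehat m,n}\ge t_{\widehat m,n}$ together with the pattern case. If you want to salvage your route, the cleanest fix is exactly this: replace the termwise limit by the generating-function product before doing any asymptotics.
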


It is widely believed that the random variable that counts the number of occurrences of a pattern satisfies a central limit theorem although there are still few results so far.
The first central limit theorem concerning pattern occurrences was given in~\cite{DP2013}, where the authors studied the problem of enumerating faces of a given valency. 
Next, the enumeration of the double 3-gons \cite{DY2018} was provided, serving as the first solved problem that goes beyond a single-face pattern.
In this paper, we entirely solve the problem of finding the first moment of the distribution of the number of occurrences of a pattern/submap. Higher moments such as the variance and proving central limit theorems are still open problems. 


We conclude this section with a short plan of the paper. In Section~\ref{GFPM}, we review a classical result about counting maps via  building and studying functional equations with respect to adequate generating functions for maps that are constructed by removing root edges from bigger maps. This will serve as an essential tool in this paper.

In Section~\ref{PP} and~\ref{LPP}, we deal with the problems of occurrence of pure polygons and patterns at the root of maps, respectively. We show that counting maps with some fixed patterns at the root is actually equivalent to counting maps with root faces being pure polygons. This is the reason why we will solve the pure polygons problem before the local pattern occurrences problem.

In Section~\ref{POPT}, 
we show that the pattern occurrence counting problem in maps relies on the local pattern occurrences problem. 
More precisely, we will show a relation between them under the rerooting method. 
And in Section~\ref{SOPT}, 
we further extend the results from patterns to submaps by inserting maps into inner-faces of patterns with the rerooting method. 
We prove our main results in Section~\ref{PF} and give examples of computing the main term's constants of our two main results in Section~\ref{EX}.

\section{Generating function for planar maps}\label{GFPM}

In this section, we review some well known results from \cite{GJ1983} concerning enumeration of combinatorial objects via generating functions. Let $\mathscr{M}$ be the class of all rooted planar maps. We say that a map is {\em bridgeable} if its root edge is a \emph{bridge}, i.e., an edge whose deletion disconnects the map.
 By distinguishing the cases of the root edge in a map, we obtain the following equation:
\begin{align}\label{mbmn}
\mathscr{M}=\bullet+\mathscr{M}^{(b)}+\mathscr{M}^{(n)},
\end{align}
where $\mathscr{M}^{(b)}$ (resp. $\mathscr{M}^{(n)}$) represents the class of bridgeable (resp. non-bridgeable) maps and $\bullet$ corresponds to the case when the map has no edges. Now, let $M(z,u)$ be the bivariate generating function as follows:
\begin{align}\label{dmzu}
M(z,u):=\sum_{n,k \geq 0}{m_{n,k}\,z^nu^k},
\end{align}
where $m_{n,k}$ is the number of maps with $n$ edges and with root faces of valency $k$. By setting $u=1$, we obtain the following formula for $M(z,1)$:
\begin{align*}
M(z,1)=\sum_{n,k \geq 0}{m_{n,k}\,z^n1^k}=\sum_{n\geq 0}{\left(\sum_{k\geq 0}{m_{n,k}}\right)z^n}.
\end{align*}
From this it is clear that
\begin{align}\label{m0-gen2}
m_n:=\sum_{k\geq 0}{m_{n,k}} 
\end{align}
is the number of planar rooted maps with $n$ edges. 

By deleting root edges and studying the so constructed maps, one obtains the following functional equation involving the generating function from (\ref{dmzu}):
\begin{align}\label{m1}
M(z,u)=1+zu^2M(z,u)^2+zu\frac{M(z,1)-uM(z,u)}{1-u}.
\end{align}
This functional equation relates two unknown functions $M(z,u)$ and $M(z,1)$.
The so-called \emph{quadratic method} is a standard procedure to solve such equations.
We can rewrite (\ref{m1}) as
\begin{align*}
\left(M(z,u)-\frac{u^2z-u+1}{2u^2(1-u)z}\right)^2=
\frac{u^4z^2-2u^2(u-1)(2u-1)z+1-u^2}{4u^4(1-u)^2z^2}+\frac{M(z,1)}{u(1-u)}.
\end{align*}
Hence, if we bind $u$ and $z$ (by setting $u = u(z)$) in such a way that the left-hand side of the above equation vanishes, then the right-hand side also vanishes, and so does the partial derivative with respect to $u$ of the right-hand side. Thus, these two relations can be used to obtain
\begin{align}\label{m54}
u(z) = \frac{5-\sqrt{1-12z}}{2(z+2)}
\qquad\text{and}\qquad
M(z,1)=\frac{18z-1+(1-12z)^{3/2}}{54z^2}.
\end{align}
We now have the singular expansion of $M(z,1)$ at its dominant singularity $z=1/12$ as follows:
\begin{align}\label{m1i2}
M(z,1)=\sum_{i=0}^{\infty}a_i(1-12z)^{i/2}
\end{align}
and obtain that $a_1=0$ and
\begin{align}\label{exm0}
M(z,1)=\,\frac{4}{3}-\frac{4}{3}\,(1-12z)+\frac{8}{3}\,(1-12z)^{3/2}
-4\,(1-12z)^2+\frac{16}{3}\,(1-12z)^{5/2}+\cdots.
\end{align}

Next, let us recall the {\it Transfer Theorem} from \cite{FO1990}\cite[Theorem VI.1]{FS2009}:
Let $\alpha$ be an arbitrary complex number in $\mathbb{C}\setminus \mathbb{Z}^+$.
The coefficient of $z^n$ in $f(z)=(1-z)^{-\alpha}$
admits for large $n$ a complete asymptotic expansion in descending powers of $n$, e.g.,
\begin{align*}
[z^n]f(z)=\frac{n^{\alpha -1}}{\Gamma (\alpha)}\bigg(1+\frac{\alpha(\alpha-1)}{2n}
+\frac{\alpha(\alpha-1)(\alpha-2)(3\alpha-1)}{24n^2}
+O(n^{-3})\bigg).
\end{align*}

Thus, we know that the $3/2$-term is the main term of Equation (\ref{exm0}). This leads to
\begin{align}\label{ma3b}
m_n=[z^n]M(z,1)\sim \frac{a_3}{\Gamma(-3/2)} n^{-5/2}12^n=\frac{2}{\sqrt{\pi}}n^{-5/2}12^n.
\end{align}
Note that one can also obtain an explicit number by Lagrange inversion theorem as follows:
\[
m_n=\frac{2(2n)!}{(n+2)!n!}3^n.
\]
Moreover, we are also interested in the asymptotic behavior of $M(z,u)$ which allows us to study the limiting distribution of the valency of the root face. We first show that   
\begin{align}\label{mu3b}
[z^n]M(z,u)\sim \frac{a_3(u)}{\Gamma(-3/2)} n^{-5/2}12^n.
\end{align}
where
\begin{align}\label{a3ueue}
a_3(u)=\frac{8u}{\sqrt{3(u+2)(-5u+6)^3}}.
\end{align}

By plugging the result from Equation (\ref{m54}) into Equation (\ref{m1}),
we have, for $u$ close to $1$, that $M(z,u)$ has the same singularity as $M(z,1)$ and its singular expansion is as follows:
\begin{align}\label{mui2}
M(z,u)=\sum_{i=0}^{\infty}a_i(u)(1-12z)^{i/2},
\end{align}
where each $a_i(u)$ is an analytic function and satisfies
\begin{align*}
a_i=\lim_{u\rightarrow 1}a_i(u),
\end{align*}
which can be checked by comparing (\ref{m1i2}) and (\ref{mui2}).

We now claim that $a_1(u)=0$ and $a_3(u)\ne 0$ for $u$ close to $1$,
which implies that $a_3(u)(1-12z)^{3/2}$ is the main term of $M(z,u)$.
To prove the claim, we first define $Z:=(1-12z)^{1/2}$ and then rewrite $z$, $M(z,1)$, and $M(z,u)$ by
\[
z=\frac{1}{12}-\frac{1}{12}Z^2,\quad\quad M(z,1)=\frac{4}{3}-\frac{4}{3}Z^2+\frac{8}{3}Z^3+\cdots,
\]
and
\[
M(z,u)=a_0(u)+a_1(u)Z+a_2(u)Z^2+a_3(u)Z^3+\cdots.
\]
We put everything above back into Equation (\ref{m1}) and compare the coefficients of $Z$ on both sides. 
First, we compute the constant term coefficients on both sides and obtain the following equation:
\[
a_0(u)=1+\frac{u^2}{12}a_0(u)^2+\frac{u\left(a_0-u a_0(u)\right)}{12(1-u)},
\]
where $a_0(u)$ should have two solutions. We choose the one that satisfies $\lim_{u\to 1}a_0(u)=a_0=4/3$ and get
\begin{align}\label{a0ueue}
a_0(u)=\frac{-3u^2+36u-36+\sqrt{3(u+2)(-5u+6)^3}}{6u^2(u-1)}.
\end{align}
Next, we check the coefficients of $Z^1$ on both sides. We have
\[
a_1(u)=\frac{1}{6}u^2a_0(u)a_1(u)-\frac{u^2a_1(u)}{12(1-u)},
\]
which leads to $a_1(u)=0$.

The only thing left is checking that $a_3(u)\ne 0$. We now compare the coefficients of $Z^3$ on both sides of Equation (\ref{m1}). 
From $a_1(u)=0$, we obtain that 
\[
a_3(u)=\frac{u^2}{12}2a_0(u)a_3(u)+\frac{u\left(a_3-u a_3(u)\right)}{12(1-u)},
\]
and after inserting the result of $a_0(u)$ from (\ref{a0ueue}), we get that $a_3(u)$ has the form from (\ref{a3ueue}) and is equal to $8/3$ for $u$ close to $1$. Now, since
$a_3(u)$ is the coefficient of the $3/2$-term which is the main term of $M(z,u)$, we obtain the asymptotic behavior as claimed in (\ref{mu3b}). 

Now, we are able to obtain the limiting probability of the valency of the root face.
First, we know that
for a random map with $n$ edges, the probability that the root face has valency $k$  is
\begin{align*}
p_{n,k}:=\frac{m_{n,k}}{m_n}.
\end{align*}
Hence the probability generating function (PGF) of the root face valency is
\begin{align*}
p_n(u):=\sum_{k\ge 0}{p_{n,k}\,u^k}=\frac{1}{m_n}\sum_{k\ge 0}{m_{n,k}\,u^k}=\frac{[z^n]M(z,u)}{[z^n]M(z,1)},
\end{align*}
and we denote by $p^*_k$ the limit of $(p_{n,k})_{n\ge 0}$ as follows:
\begin{align*}\label{ppkk}
p^*_k=\lim_{n\to \infty}p_{n,k}.
\end{align*}
Then we have that 
the probability generating function of the limiting distribution of the root face valency is 
\[
p(u):=\sum_{k\ge 0}{p^*_k\,u^k}=\lim_{n\to \infty}\sum_{k\ge 0}{p_{n,k}\,u^k}
=\lim_{n\to \infty}\frac{[z^n]M(z,u)}{[z^n]M(z,1)},
\]
and by using the results from (\ref{ma3b}) and (\ref{mu3b}), we have
\[
p(u)=\frac{a_3(u)}{a_3}=\frac{\sqrt{3}u}{\sqrt{(u+2)(-5u+6)^3}}
\]
which has a dominant singularity at $u=6/5$. 
\begin{remark}
As $k\to \infty$, 
by the Transfer Theorem, we have
\begin{align*}
p^*_k=[u^k]p(u)\sim \frac{k^{1/2}}{2\sqrt{10\pi}} \left(\frac{5}{6}\right)^k.
\end{align*}
\end{remark}
Moreover, we can also show that $p^*_k$ is a positive rational number for each positive integer $k$.
From
\begin{align}\label{pue}
p(u)=&\frac{\sqrt{3}u}{\sqrt{(u+2)(-5u+6)^3}}\nonumber\\ 
=&\sqrt{3}u\left(\sum_{i\ge 0}\binom{-1/2}{i}\frac{1}{\sqrt{2}}\left(\frac{1}{2}u\right)^i\right)
\cdot\left(\sum_{j\ge 0}\binom{-3/2}{j}\frac{1}{6\sqrt{6}}\left(\frac{-5}{6}u\right)^j\right)\nonumber \\
=&\frac{u}{12}\left(\sum_{i\ge 0}\binom{2i}{i}\left(\frac{-1}{8}u\right)^i\right)
\cdot\left(\sum_{j\ge 0}\binom{2j}{j}(2j+1)\left(\frac{5}{24}u\right)^j\right),
\end{align}
we see that $p^*_k=[u^k]p(u)$ is rational. 
And as for the positivity of $p_k^*$, 
when $n\ge k$,
we can easily construct a subclass of the class of maps with $n$ edges and its root face of valency $k$
by putting a $k$-cycle into the root face of each map with $n-k$ edges 
such that the $k$-cycle is the root face of the resulting map.
Therefore, we have, for $n\ge k$,
\begin{align}\label{argu1}
m_{n,k}\ge m_{n-k}, \quad\text{which leads to}\quad
p_{n,k}=\frac{m_{n,k}}{m_n}\ge \frac{m_{n-k}}{m_n}.
\end{align}
Hence, when $n$ tends to infinity,
\[
p_k^*=\lim_{n\to \infty}p_{n,k}\ge \lim_{n\to \infty}\frac{m_{n-k}}{m_n}
=12^{-k}>0.
\]

\section{Pure polygons}\label{PP}
We say that a face is a {\em pure $\ell$-gon} ($\ell\ge 2$) if it is incident to exactly $\ell$ different edges and $\ell$ different vertices. Let $f_{\ell,n}$ be the number of maps with $n$ edges and whose root faces are pure $\ell$-gons. We denote by $\xi_{\ell,n}$ the probability that the root face in a map of size $n$
is a pure $\ell$-gon, i.e.,
\begin{align*}
\xi_{\ell,n}:=\frac{f_{\ell,n}}{m_n}.
\end{align*}

\begin{proposition}\label{tgn}
The limiting distribution of the probability that the root face in a random map
is a pure $\ell$-gon is
\begin{align}\label{zeqq}
\xi_{\ell}:=\lim_{n\to \infty}\xi_{\ell,n}=\frac{1}{12^{\ell}(\ell-1)!} \left(\frac{\partial^{\ell-1}}{\partial u^{\ell-1}}\frac{\sqrt{3}u^{\ell}}{\sqrt{(u+2)(-5u+6)^3}}\right)\Bigg|_{u=1},
\end{align}
and $\xi_{\ell}$ is a positive rational number.
\end{proposition}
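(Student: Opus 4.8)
The plan is to reduce the pure $\ell$-gon problem to the bivariate generating function $M(z,u)$ of Section~\ref{GFPM} by a contraction bijection, and then to extract $\xi_\ell$ by singularity analysis. Write $F_\ell(z)=\sum_n f_{\ell,n}z^n$. Given a map $M$ with $n$ edges whose root face $R$ is a pure $\ell$-gon, with boundary cycle $v_1,\dots,v_\ell$, root edge $c_1=(v_1,v_2)$, and remaining cycle edges $c_2,\dots,c_\ell$, I would first \emph{contract} the simple path $c_2,\dots,c_\ell$: this keeps the map connected and planar, merges $v_1,\dots,v_\ell$ into a single vertex $v^\ast$, and turns $c_1$ into a loop at $v^\ast$ whose interior is the now-empty root face, producing a map $N'$ with $n-\ell+1$ edges. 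Deleting that loop and re-rooting $N'$ at the edge following the loop's corner at $v^\ast$ (directed out of $v^\ast$) yields a rooted map $N''$ with $n-\ell$ edges and root vertex $v^\ast$, and this operation is reversible, so $N'\leftrightarrow N''$ is a bijection. Undoing the contraction then amounts to blowing $v^\ast$ back up into the $\ell$-cycle, that is, cutting the cyclic sequence of the $k:=\deg_{N''}(v^\ast)$ non-loop edge-ends at $v^\ast$ (the position of $c_1$ being fixed) into $\ell$ consecutive, possibly empty, blocks attached to $v_1,\dots,v_\ell$; there are exactly $\binom{k+\ell-1}{\ell-1}$ ways to do this. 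Finally, by planar duality, which exchanges the root-vertex degree with the root-face valency, the number of maps $N''$ with $n-\ell$ edges and root vertex of degree $k$ equals $m_{n-\ell,k}$. Combining these steps gives, for $n\ge\ell$,
\[
f_{\ell,n}=\sum_{k\ge 0}\binom{k+\ell-1}{\ell-1}m_{n-\ell,k},
\qquad\text{equivalently}\qquad
F_\ell(z)=\frac{z^\ell}{(\ell-1)!}\left.\frac{\partial^{\ell-1}}{\partial u^{\ell-1}}\bigl(u^{\ell-1}M(z,u)\bigr)\right|_{u=1},
\]
since $\frac{1}{(\ell-1)!}\partial_u^{\ell-1}(u^{\ell-1}u^k)|_{u=1}=\binom{k+\ell-1}{\ell-1}$; the small cases $f_{\ell,\ell}=1$ and $f_{2,3}=5$ can be checked against this by hand.

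Next I would run singularity analysis. By~(\ref{mui2}), for $u$ near $1$ the function $M(z,u)$ has the expansion $\sum_i a_i(u)(1-12z)^{i/2}$ with each $a_i(u)$ analytic near $u=1$, $a_1(u)\equiv 0$, and dominant term $a_3(u)(1-12z)^{3/2}$, where $a_3(u)$ is as in~(\ref{a3ueue}) and $a_3:=a_3(1)=8/3$. Because this expansion is uniform in $u$ on a neighbourhood of $1$, it can be differentiated in $u$ term by term; applying the operator $\frac{1}{(\ell-1)!}\partial_u^{\ell-1}(u^{\ell-1}\,\cdot\,)|_{u=1}$ and multiplying by $z^\ell$ shows that $F_\ell$ has a singular expansion at $z=1/12$ whose $(1-12z)^{3/2}$-coefficient is $\frac{1}{(\ell-1)!}\partial_u^{\ell-1}(u^{\ell-1}a_3(u))|_{u=1}$, with the factor $z^\ell$ contributing $12^{-\ell}$ to the coefficient asymptotics. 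By the Transfer Theorem this $3/2$-term governs $f_{\ell,n}$ just as in~(\ref{ma3b})--(\ref{mu3b}); in particular the limit $\xi_\ell=\lim_n f_{\ell,n}/m_n$ exists, and dividing by $m_n\sim\frac{a_3}{\Gamma(-3/2)}n^{-5/2}12^n$ cancels the common factor $a_3/\Gamma(-3/2)$ to give
\[
\xi_\ell=\frac{12^{-\ell}}{(\ell-1)!}\left.\frac{\partial^{\ell-1}}{\partial u^{\ell-1}}\!\left(u^{\ell-1}\,\frac{a_3(u)}{a_3}\right)\right|_{u=1}
=\frac{12^{-\ell}}{(\ell-1)!}\left.\frac{\partial^{\ell-1}}{\partial u^{\ell-1}}\!\left(u^{\ell-1}p(u)\right)\right|_{u=1},
\]
since $a_3(u)/a_3=p(u)=\sqrt3\,u/\sqrt{(u+2)(-5u+6)^3}$; as $u^{\ell-1}p(u)=\sqrt3\,u^\ell/\sqrt{(u+2)(-5u+6)^3}$, this is exactly~(\ref{zeqq}).

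It remains to establish positivity and rationality. For positivity, $\binom{k+\ell-1}{\ell-1}\ge 1$ for every $k\ge 0$, so the identity above gives $f_{\ell,n}\ge\sum_{k\ge 0}m_{n-\ell,k}=m_{n-\ell}$, whence $\xi_\ell\ge\lim_n m_{n-\ell}/m_n=12^{-\ell}>0$ by~(\ref{ma3b}). For rationality, note from~(\ref{a3ueue}) that $p(u)^2=3u^2/\bigl((u+2)(-5u+6)^3\bigr)$ is a rational function of $u$ with $p(1)^2=1$; writing $p(u)=\sqrt{1+w(u)}$ with $w:=p^2-1\in\mathbb{Q}[[u-1]]$ and $w(1)=0$, and expanding $\sqrt{1+w}=\sum_{j\ge 0}\binom{1/2}{j}w^j$ with all $\binom{1/2}{j}\in\mathbb{Q}$, we see $p(u)$ is analytic at $u=1$ with all of its Taylor coefficients there rational. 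Hence every $u$-derivative of $u^{\ell-1}p(u)$ at $u=1$ lies in $\mathbb{Q}$, so $\xi_\ell\in\mathbb{Q}$, and it is positive by the bound just obtained.

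The main obstacle is the first step: one must check carefully, at the level of rotation systems, that once the root edge is fixed, blowing a degree-$k$ root vertex up into an $\ell$-cycle bounding an empty face has exactly $\binom{k+\ell-1}{\ell-1}$ preimages, that the re-rooting of $N''$ and the duality step are carried out consistently with orientations, and that the degenerate case $k=0$ (the bare $\ell$-gon) is correctly matched. The analytic points---uniformity of the expansion~(\ref{mui2}) in $u$, term-by-term $u$-differentiation, and, if one instead derives the closed form from $\lim_n\sum_k\binom{k+\ell-1}{\ell-1}p_{n-\ell,k}$, the interchange of limit and infinite sum (licensed by the geometric decay of $p_{n,k}$ in $k$, uniform in $n$)---are routine but should be stated. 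A more computational alternative to the first step would be to introduce a further generating-function variable marking a pure $\ell$-gon root face and to solve the resulting functional equation by the quadratic method as in Section~\ref{GFPM}; the contraction argument is shorter and directly re-uses $M(z,u)$.
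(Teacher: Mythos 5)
Your proposal is correct and follows essentially the same route as the paper: it establishes the identity $F_\ell(z)=\frac{z^\ell}{(\ell-1)!}\partial_u^{\ell-1}\bigl(u^{\ell-1}M(z,u)\bigr)\big|_{u=1}$ (the paper's Lemma~\ref{lemfz}), then applies the singular expansion of $M(z,u)$ and the Transfer Theorem to get $\xi_\ell=\kappa_{\ell,3}/a_3$, and finally proves positivity via $f_{\ell,n}\ge m_{n-\ell}$ and rationality by a binomial-series expansion of $p(u)$ around $u=1$. Your contraction-plus-blow-up bijection is just the dual formulation of the paper's star-insertion (the stars-and-bars count $\binom{k+\ell-1}{\ell-1}$ is the same), and your minor variations in the positivity and rationality arguments are equally valid.
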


\begin{remark}
As $\ell\to \infty$, we have 
\[
\xi_{\ell}\sim c\cdot {\ell}^{1/2}\left(\frac{5}{6}\right)^{\ell},
\] 
where $c$ is a suitable constant.
\end{remark}

Before we prove Proposition~\ref{tgn}, we present some lemmas.

\begin{lemma}\label{lemfz}
Let $F_{\ell}(z)$ be the ordinary generating function corresponding to $\big( f_{\ell,n} \big)_{n \geq 0}$. We have that,
for every $\ell \geq 2$,
\begin{align}\label{fmz}
F_{\ell}(z):=\sum_{n\geq 0}f_{\ell,n}\,z^n=\frac{z^\ell}{(\ell-1)!}\frac{\partial^{\ell-1}}{\partial u^{\ell-1}}u^{\ell-1}M(z,u)\bigg|_{u=1},
\end{align}
where $M(z,u)$ is defined by \eqref{dmzu}.
\end{lemma}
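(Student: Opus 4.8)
The plan is to evaluate the differential operator on the right-hand side of \eqref{fmz} explicitly, reduce the claim to a coefficient identity, and then prove that identity by decomposing a map whose root face is a pure $\ell$-gon. Writing $M(z,u)=\sum_{k\ge 0}M_k(z)u^k$ with $M_k(z)=\sum_{n\ge 0}m_{n,k}z^n$, and using $\frac{\partial^{\ell-1}}{\partial u^{\ell-1}}u^{k+\ell-1}=\frac{(k+\ell-1)!}{k!}\,u^{k}$, one gets
\[
\frac{1}{(\ell-1)!}\frac{\partial^{\ell-1}}{\partial u^{\ell-1}}\Big(u^{\ell-1}M(z,u)\Big)\Big|_{u=1}
=\sum_{k\ge 0}\binom{k+\ell-1}{\ell-1}M_k(z).
\]
(One may also read this off from the Hadamard product of $M(z,u)$ with $(1-u)^{-\ell}=\sum_{k\ge 0}\binom{k+\ell-1}{\ell-1}u^{k}$, expressed as the residue at $u=1$ of $u^{\ell-1}(u-1)^{-\ell}M(z,u)$.) Multiplying by $z^{\ell}$ and extracting $[z^{n}]$, the assertion \eqref{fmz} becomes the combinatorial identity
\[
f_{\ell,n}=\sum_{k\ge 0}\binom{k+\ell-1}{\ell-1}\,m_{n-\ell,\,k}\qquad(n\ge 0),
\]
which is what I would actually prove.

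To prove this identity I would set up a bijection between maps with $n$ edges whose root face is a pure $\ell$-gon and pairs $(M',\omega)$: here $M'$ is an arbitrary rooted map with $n-\ell$ edges, and $\omega$ encodes how to reinsert the $\ell$-gon. The natural choice for $\omega$ is a weak composition $k_1+\dots+k_\ell=k$ into $\ell$ nonnegative parts, where $k$ is the valency of the root face of $M'$; there are $\binom{k+\ell-1}{\ell-1}$ of these, matching the multiplicity above. Informally, the boundary walk of the root face of $M'$ (length $k$, canonically oriented and based at the root vertex) is split into $\ell$ consecutive arcs of lengths $k_1,\dots,k_\ell$, and $M'$ is placed in the inner region of a fresh $\ell$-cycle so that the $i$-th arc of the boundary walk runs alongside the $i$-th edge of the cycle; the new outer face is then a simple $\ell$-gon, the second face of $M$ is what remains of the annular region between $M'$ and the cycle, $M$ is rooted at the cycle edge issuing from the root vertex of $M'$, and exactly $\ell$ new edges have been added, so $M'$ indeed has $n-\ell$ edges. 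The inverse operation deletes the $\ell$ edges bounding the pure $\ell$-gon, records how the corners of that face are distributed among its $\ell$ vertices (this is $\omega$), and re-roots along the merged face, with the vertices that become isolated re-interpreted as ``fresh'' polygon vertices.

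The main obstacle is making this correspondence precise and checking that it is a genuine bijection. One has to fix the canonical re-rooting of $M'$ and of $M$, verify that cutting the $\ell$-gon out of $M$ and reinserting it are mutually inverse, and --- most delicately --- show that the ``pure'' condition (the $\ell$ bounding edges and the $\ell$ bounding vertices are pairwise distinct) corresponds precisely to allowing all weak compositions, including those with vanishing parts. The degenerate situations ($k_i=0$, $M'$ reduced to a single vertex, or a boundary walk of $M'$ that revisits a vertex) are exactly where the $\ell$-cycle's own fresh vertices --- as opposed to vertices inherited from $M'$ --- come into play, and they need careful treatment. The case $\ell=2$ (where the identity reads $f_{2,n}=\sum_{k\ge 0}(k+1)\,m_{n-2,k}$, with $n=\ell=2$ giving the $2$-cycle and $n=3$ giving five maps that one can list by hand) already exhibits the full mechanism, so I would settle that first and then treat general $\ell\ge 2$ by the same recipe.
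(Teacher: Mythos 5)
Your reduction of \eqref{fmz} to the coefficient identity $f_{\ell,n}=\sum_{k\ge 0}\binom{k+\ell-1}{\ell-1}m_{n-\ell,k}$ is exactly the content of the paper's proof: the differential operator there is nothing more than the generating-function encoding of the transformation $z^nu^k\mapsto z^{n+\ell}\binom{k+\ell-1}{\ell-1}$, so your multiplicity is the right one and the lemma stands or falls with this identity. Where you diverge is in how the identity is established. The paper does not wrap an $\ell$-cycle around the smaller map; it passes to the dual, where a pure $\ell$-gon root face becomes an $\ell$-pie root vertex, and then manufactures $\ell$-pies by inserting an $\ell$-star into the root face of a map $M'$ with $n-\ell$ edges: the star's root edge is attached at the root corner, and the remaining $\ell-1$ edges are attached to the corners of the (enlarged) root face without crossings, which gives exactly $\binom{k+\ell-1}{\ell-1}$ choices. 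This dual formulation dissolves precisely the difficulties you single out as ``the main obstacle'': the star centre is automatically incident to $\ell$ distinct edges and $\ell$ distinct faces, so the ``pure'' condition costs nothing; vanishing parts of your composition correspond simply to several star edges landing in the same corner; and there are no isolated or ``fresh'' vertices to re-interpret. By contrast, your primal construction is the same bijection viewed through duality, but as written the forward map is not actually defined --- ``placed so that the $i$-th arc runs alongside the $i$-th edge of the cycle'' does not specify how the fresh cycle is attached to $M'$ (as stated it produces a disconnected object), and the isolated-vertex bookkeeping in your inverse is exactly the case analysis the dual picture avoids. So either carry your construction through in full detail (settling $\ell=2$ will not by itself resolve the attachment and degeneracy issues for general $\ell$), or take the shorter route and dualize.
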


It is worth mentioning that three different proofs of this theorem are provided in~\cite{GRY2019}. Here we only present one of them, which is done by means of the \emph{star-inserting method}.
However, before proving Lemma~\ref{lemfz}, we recall some notions from graph theory and describe the construction used in the proof.

We say that a vertex $v$ is an {\em $\ell$-pie} ($\ell\ge 2$) if $v$ is incident to exactly $\ell$ different edges and $\ell$ different faces. By graph duality, a pure $\ell$-gon (a face) in a map corresponds to an $\ell$-pie (a vertex) in the dual map and vice versa.
Thus, the problem of counting maps with root faces being pure polygons is equivalent to the problem of counting maps with root vertices being pies.

A {\em star} is a tree with at least two edges in which there is only one internal node (and so the other vertices are leaves). Furthermore, an {\em $\ell$-star} is a tree with $\ell$ leaves. In fact, if we have a map whose root vertex is an $\ell$-pie, then we can decompose this map by deleting its root vertex $v$ and the $\ell$ edges that are incident to $v$. 
Consequently, the map is divided into two parts: an $\ell$-star and a remaining map.

The idea of the star-inserting method consists in inserting an $\ell$-star into the root face of a map to create an $\ell$-pie. Hence, we aim at showing that this decomposition is reversible. 
This will give us a relation between maps with roots being $\ell$-pies and maps of size smaller than $\ell$. This relation will turn out to be crucial in the proof of Lemma~\ref{lemfz}. The star-insertion consists of the following two steps:
First, we take a randomly chosen map $M$ and attach an $\ell$-star (making its internal node the root vertex) in such a way that the end of the root edge $e$ of the $\ell$-star has its end at the root vertex of $M$ and we let $e$ be the new root edge in the so constructed map.
Next, we connect the remaining $\ell-1$ edges with the vertices lying on the root face of $M$ in a random, yet crossing-free, way.
In the end, we obtain a new map whose root vertex (the internal node of the $\ell$-star) is an $\ell$-pie, as it is incident to $\ell$ different faces and $\ell$ different edges. See Figure~\ref{fig:star} for an example.

\begin{center}
\begin{figure}[H]
\begin{minipage}{1\textwidth}
    \begin{center}
        {\includegraphics[width=1\textwidth]{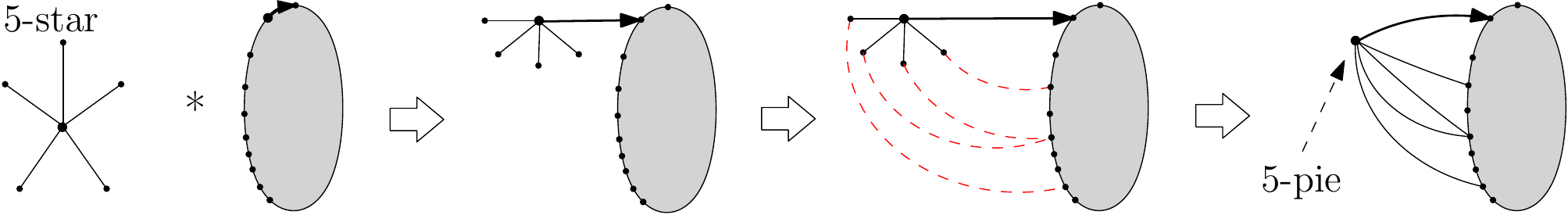}}
        \end{center}
\end{minipage}
\caption{Inserting a $5$-star in order to create a $5$-pie.}
\label{fig:star}
\end{figure}
\end{center}

Now, we prove Lemma~\ref{lemfz} by applying the above construction.

\begin{proof}[Proof of Lemma~\ref{lemfz}]
If a map $M$ has its root face of valency $k$, then there are $\frac{(\ell-1+k)!}{(\ell-1)!k!}$ different ways to connect $\ell-1$ non-root edges of the $\ell$-star to the vertices that belong to the root face of $M$ without crossings. 

The effect of adding an $\ell$-star into a map with $n$ edges and the root face of valency $k$ has the corresponding transformation:
\begin{align}\label{eqc1}
z^nu^k\mapsto z^{n+\ell}\,\frac{(\ell-1+k)!}{(\ell-1)!k!},
\end{align}
where the domain depends on $u$, but the image does not depend on it. However, the image depends on the power of $u$ in the domain.

By the following equation:
\begin{align*}
\frac{(\ell-1+k)!}{k!}=\frac{\partial^{\ell-1}}{\partial u^{\ell-1}}u^{\ell-1+k}\bigg|_{u=1},
\end{align*}
along with (\ref{eqc1}),
we have that each monomial $h(z,u)=z^nu^k$ has the following transformation:
\begin{align*}
h(z,u)\mapsto \frac{z^{\ell}}{(\ell-1)!}\frac{\partial^{\ell-1}}{\partial u^{\ell-1}}u^{\ell-1}h(z,u)\bigg|_{u=1},
\end{align*}
which means that if we insert an $\ell$-star into the root face of some map, we have the following transformation:
\[
M(z,u)\mapsto \frac{z^{\ell}}{(\ell-1)!}\frac{\partial^{\ell-1}}{\partial u^{\ell-1}}u^{\ell-1}M(z,u)\bigg|_{u=1}.
\]
Thus, the ordinary generating function $F_{\ell}(z)$ involves a differential operator as follows:
\begin{align*}
F_{\ell}(z)=\sum_{n\geq 0}f_{\ell,n}\,z^n=\frac{z^{\ell}}{(\ell-1)!}\frac{\partial^{\ell-1}}{\partial u^{\ell-1}}u^{\ell-1}M(z,u)\bigg|_{u=1},
\end{align*}
and the proof is finished.
\end{proof}

\begin{lemma}\label{posi1}
For every $\ell\ge 1$, we have that $\xi_{\ell}$ is positive.
\end{lemma}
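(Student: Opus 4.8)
The plan is to observe that the cycle-insertion construction already used in the paragraph around~\eqref{argu1} in fact produces maps whose root face is a \emph{pure} polygon, so that positivity of $\xi_\ell$ follows from positivity of $\lim_{n\to\infty} m_{n-\ell}/m_n$. Fix $\ell\ge 1$ and $n\ge\ell$. From an arbitrary rooted map $M$ with $n-\ell$ edges and root vertex $r$, form a map $M'$ with $n$ edges by taking an $\ell$-cycle $C$ on $\ell$ new pairwise distinct vertices $v_1,\dots,v_\ell$ and $\ell$ new pairwise distinct edges (a loop at $v_1$ when $\ell=1$), identifying $v_1$ with $r$, drawing $M$ inside the disk bounded by $C$, and declaring one edge of $C$ to be the new root edge, oriented so that the region outside $C$ is the root face of $M'$. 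Since $C$ is a simple cycle, the root face of $M'$ is incident to exactly its $\ell$ edges and $\ell$ vertices, hence is a pure $\ell$-gon; and $M$ is recovered from $M'$ by deleting the edges of $C$ and the vertices $v_2,\dots,v_\ell$, so $M\mapsto M'$ is injective. Therefore
\[
f_{\ell,n}\ \ge\ m_{n-\ell}\qquad(n\ge\ell).
\]

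Dividing by $m_n$ gives $\xi_{\ell,n}=f_{\ell,n}/m_n\ge m_{n-\ell}/m_n$ for every $n\ge\ell$. Passing to the limit and using~\eqref{ma3b} (or the exact formula $m_n=\tfrac{2(2n)!}{(n+2)!\,n!}3^n$), one has $m_{n-\ell}/m_n\to 12^{-\ell}$, so $\liminf_{n\to\infty}\xi_{\ell,n}\ge 12^{-\ell}$. To conclude that $\xi_\ell=\lim_n\xi_{\ell,n}$ itself exists --- and to keep the argument independent of the as-yet-unproven Proposition~\ref{tgn} --- I would invoke Lemma~\ref{lemfz}: applying the operator $\frac{z^\ell}{(\ell-1)!}\frac{\partial^{\ell-1}}{\partial u^{\ell-1}}u^{\ell-1}(\cdot)\big|_{u=1}$ to the singular expansion~\eqref{mui2} of $M(z,u)$ shows that $F_\ell(z)$ has a singular expansion in powers of $(1-12z)^{1/2}$ whose $(1-12z)^{1/2}$-coefficient vanishes (because $a_1(u)\equiv0$ and $z^\ell$ is a polynomial in $(1-12z)$), so the $(1-12z)^{3/2}$-term is its dominant singular term; the Transfer Theorem then yields $f_{\ell,n}\sim\xi_\ell\,m_n$ for a constant $\xi_\ell$, and combined with the lower bound this forces $\xi_\ell\ge 12^{-\ell}>0$.

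I do not expect a real obstacle here. The construction is a direct variant of one already in the paper, and the only things needing a word of care are checking that a simple cycle serving as a root face is automatically a pure $\ell$-gon (its incident edges and vertices are distinct by construction) and ensuring that the existence of $\xi_\ell$ is not imported circularly from Proposition~\ref{tgn}, which is why I route it through Lemma~\ref{lemfz} and the Transfer Theorem. If one is willing to treat Lemma~\ref{posi1} as a statement purely about the number defined by~\eqref{zeqq}, it suffices instead to record the inequality $\xi_{\ell,n}\ge m_{n-\ell}/m_n$ and take $n\to\infty$ at the end of the proof of Proposition~\ref{tgn}.
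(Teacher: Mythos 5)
Your argument is correct and is essentially the paper's own proof: both rest on the observation that the $\ell$-cycle insertion used to show $p^*_\ell>0$ in fact produces root faces that are pure $\ell$-gons, giving $f_{\ell,n}\ge m_{n-\ell}$ and hence $\xi_\ell\ge 12^{-\ell}$. Your extra care about not importing the existence of $\lim_n \xi_{\ell,n}$ circularly is sensible but matches how the paper actually proceeds, since Proposition~\ref{tgn} establishes the existence of the limit via Lemma~\ref{lemfz} and the Transfer Theorem before invoking Lemma~\ref{posi1} only for positivity.
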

\begin{proof}
In order to prove the positivity of $\xi_{\ell}$, we simply repeat the construction of showing the positivity of $p^*_{\ell}$. Actually, the construction gives us a sub-class of the class of maps whose root faces are not only faces of valency $\ell$ but also pure $\ell$-gons.
Thus, by the same argument as (\ref{argu1}), we have, for $n\ge \ell$,
\begin{align*}
f_{\ell,n}\ge m_{n-\ell}, \quad \text{which leads to } \quad
\xi_{\ell,n}=\frac{f_{\ell,n}}{m_n}\ge \frac{m_{n-\ell}}{m_n}.
\end{align*}
Hence, when $n$ tends to infinity, we have
\[
\xi_{\ell}=\lim_{n\to \infty}\xi_{\ell,n}\ge \lim_{n\to \infty}\frac{m_{n-\ell}}{m_n}=12^{-\ell}>0.\qedhere
\]
\end{proof}

Now, we are ready to prove Proposition~\ref{tgn}.

\begin{proof}[Proof of Proposition~\ref{tgn}]
By (\ref{mui2}), we have
\begin{align*}
\frac{\partial^{\ell-1}}{\partial u^{\ell-1}}\left(u^{\ell-1}M(z,u)\right)=
\sum_{i\ge 0}\frac{\partial^{\ell-1}}{\partial u^{\ell-1}}\left(u^{\ell-1} a_i(u)\right)(1-12z)^{i/2},
\end{align*}
where $a_1(u)=0$. Moreover, we have the following expansion of $z^{\ell}$:
\begin{align*}
z^{\ell}=\sum_{i=0}^{\ell}\binom{\ell}{i} \frac{(-1)^i}{12^{\ell}}(1-12z)^i. 
\end{align*}

Thus, we have for the expansion of $F_{\ell}(z)$ at its singularity,
\begin{align}\label{hm1}
F_{\ell}(z)=\sum_{i\ge 0}\kappa_{\ell,i}\,(1-12z)^{i/2},
\end{align}
with $\kappa_{\ell,1}=0$ and
\begin{align}\label{hm2}
\kappa_{\ell,3}=\frac{1}{12^{\ell}(\ell-1)!}\left(\frac{\partial^{\ell-1}}{\partial u^{\ell-1}}u^{\ell-1}a_3(u)\right)\Bigg|_{u=1}.
\end{align}
By the Transfer Theorem, we therefore have that
\begin{align*}
[z^n]F_{\ell}(z)\sim  \frac{\kappa_{\ell,3}}{\Gamma(-3/2)}\,n^{-5/2}12^n,
\end{align*}
and by comparing this with (\ref{ma3b}),
we obtain that
\begin{align}\label{topo}
\xi_{\ell}=&\lim_{n\to \infty}\frac{[z^n]F_{\ell}(z)}{[z^n]M(z,1)}=\frac{\kappa_{\ell,3}}{a_3}
=\frac{1}{12^{\ell}(\ell-1)!} \left(\frac{\partial^{\ell-1}}{\partial u^{\ell-1}}u^{\ell-1} \frac{a_3(u)}{a_3}\right)\bigg|_{u=1}
\end{align}
and by (\ref{a3ueue}), we obtain (\ref{zeqq}). Next, we show that $\xi_{\ell}\in \mathbb{Q}$. By 
replacing $u$ by $v+1$, we have

\begin{align*}
\xi_{\ell}&=\frac{1}{12^{\ell}(\ell-1)!} \left(\frac{\partial^{\ell-1}}{\partial v^{\ell-1}}\frac{\sqrt{3}(1+v)^{\ell}}{\sqrt{(v+3)(-5v+1)^3}}\right)\Bigg|_{v=0}
=\frac{1}{12^{\ell}}[v^{\ell-1}]\left(\frac{\sqrt{3}(1+v)^{\ell}}{\sqrt{(v+3)(-5v+1)^3}}\right),
\end{align*}
which can be rewritten to
\begin{align*}
\xi_{\ell}&=12^{-\ell}[v^{\ell-1}]\left(\sqrt{3}(1+v)^{\ell}(v+3)^{-1/2}(-5v+1)^{-3/2}\right)\\
&=12^{-\ell}[v^{\ell-1}]\left((1+v)^{\ell}(1+v/3)^{-1/2}(1-5v)^{-3/2}\right)\\
&=12^{-\ell}\sum_{j=0}^{\ell-1}\sum_{i=0}^{\ell-1-j}\binom{\ell}{\ell-1-i-j}\binom{-1/2}{i}\binom{-3/2}{j}
3^{-i}(-5)^{j}.
\end{align*}
This shows that $\xi_{\ell}$ is rational. Finally, by combining with Lemma~\ref{posi1}, the proof is finished. 
\end{proof}

\section{Local pattern probability}\label{LPP}
In this section, we provide a combinatorial approach to determine limiting probabilities of rooted pattern occurrences in random planar maps.
In particular, as the main goal in this section we will solve the following problem. Fix a local structure at the root and show that the limiting probability that this structure occurs exists and compute it.

An edge $e$ is called an {\em inner-edge} if $e$ is not incident to the root face, while $e$ is called an {\em outer-edge} if $e$ is a bridge and incident to the root face
(see examples in Figure~\ref{p_i}).
The following proposition is the main result of this section which gives explicit results on the local convergence:

\begin{proposition}\label{patthm}
The limiting probability $P_{\widehat{m}}$ that a pattern $\widehat{m}$ with $k$ inner-edges, $s$ outer-edges, and the root face of valency $\ell$ occurs at the root in a random map is given by
\begin{align*}
P_{\widehat{m}}=\xi_{\ell}\left(\frac{1}{12}\right)^{k-s},
\end{align*}
where $\xi_{\ell}$ is the limiting distribution of the probability that the root face in a random map is a pure $\ell$-gon given in (\ref{zeqq}).
\end{proposition}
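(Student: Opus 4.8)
~\textbf{Proof plan.}  The strategy is to reduce the probability that a given pattern $\widehat{m}$ occurs at the root to the already-computed quantity $\xi_\ell$, the limiting probability that the root face is a pure $\ell$-gon, with a correction factor that records how many edges of the pattern live strictly inside the map versus on its root face.  First I would set up the generating function $G_{\widehat{m}}(z)$ counting maps with $n$ edges in which $\widehat{m}$ occurs at the root; by Definition~\ref{defpp} this means the root edge of $M$ is the root edge of $\widehat{m}$, the $k$ inner-edges and $s$ outer-edges of $\widehat{m}$ all appear, and the $\ell$ boundary edges of the root face of $\widehat{m}$ all appear on the root face of $M$.  The key structural observation is that a map with $\widehat{m}$ at the root is obtained from an arbitrary map $M'$ (the ``remaining'' map, after carving out $\widehat{m}$) by gluing $\widehat{m}$ onto the root face of $M'$, and this gluing is reversible: deleting the inner-edges and inner-vertices of $\widehat{m}$, and then the $s$ outer-edges (which are bridges), recovers $M'$ uniquely, because everything that was inside a bounded face of $\widehat{m}$ can be pushed back out.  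Essentially the same ``carving'' argument used for the star-insertion in Lemma~\ref{lemfz} applies here.

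Concretely, I would argue that a map in which $\widehat{m}$ occurs at the root, stripped down to $\widehat{m}$ plus whatever hangs off its root-face boundary, is the same data as: (i) a map $M'$ whose root face is a pure $\ell$-gon, weighted as in $F_\ell(z)$; (ii) the choice of where to reattach the $k$ inner-edges of $\widehat{m}$; and (iii) the $s$ outer-edges, each a bridge sticking into the root face.  The bookkeeping should show that the inner-edges contribute $z^k$ (they add $k$ edges but, being inside bounded faces, do not change the root-face structure, and there is exactly one way to place them to reconstruct $\widehat{m}$) while each outer-edge is a bridge on the root face and contributes in a way that cancels against part of the root-face count — this is the source of the $12^{s}$ in the numerator and the $12^{-k}$ overall, i.e. the factor $(1/12)^{k-s}$.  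Then
\[
P_{\widehat{m}}=\lim_{n\to\infty}\frac{[z^n]G_{\widehat{m}}(z)}{[z^n]M(z,1)}
=\lim_{n\to\infty}\frac{[z^n]\, z^{k-s}F_\ell(z)}{[z^n]M(z,1)}\cdot(\text{const}),
\]
and since multiplying an ordinary generating function by $z^{k-s}$ shifts coefficients and, by the Transfer Theorem and Equation~(\ref{ma3b}), changes the asymptotic constant by the factor $12^{-(k-s)}$, this limit equals $\xi_\ell\,(1/12)^{k-s}$ by Proposition~\ref{tgn} and Equation~(\ref{topo}).

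I expect the main obstacle to be making the reversibility of the decomposition fully precise: one must verify that given a map $M$ with $\widehat{m}$ at the root, the operation ``delete the inner part of $\widehat{m}$ and the outer bridges, then collapse the freed-up region into the root face'' produces a well-defined map $M'$ whose root face is genuinely a \emph{pure} $\ell$-gon (exactly $\ell$ distinct edges and $\ell$ distinct vertices, with no accidental identifications), and that the inverse gluing is unique — that is, that there is exactly one way to reinsert $\widehat{m}$, so no extra combinatorial factor appears beyond the explicit $z^{k-s}$ and the outer-edge count.  Care is also needed with degenerate patterns (loops, multiple edges, outer-edges that are themselves on the boundary of $\widehat{m}$'s root face) and with the precise definition of which edges of $\widehat{m}$ are counted among the $\ell$ boundary edges versus the $s$ outer-edges.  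Once the bijection and its weight-transformation are nailed down, the asymptotic extraction is immediate from the singularity analysis already carried out in Sections~\ref{GFPM} and~\ref{PP}.
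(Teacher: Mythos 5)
Your proposal follows essentially the same route as the paper: it establishes the generating-function identity $\mathbb{F}_{\widehat{m}}(z)=z^{k-s}F_{\ell}(z)$ by a carving/gluing bijection between maps with $\widehat{m}$ at the root and maps whose root (or, in the paper, second) face is a pure $\ell$-gon, and then extracts the factor $(1/12)^{k-s}$ by singularity analysis, exactly as in Lemma~\ref{bij2} and the paper's proof of Proposition~\ref{patthm}. The only slight imprecision is in the treatment of outer-edges: the inverse operation is not to \emph{delete} each bridge (which would disconnect the map) but to split it into a pair of distinct boundary edges of the pure $\ell$-gon, which is precisely why each outer-edge reduces the edge count by one and yields the exponent $k-s$.
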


As a matter of fact, Proposition~\ref{patthm} concerning the limiting distribution comes from Lemma~\ref{bij2} presented below, which constitutes an extension of Lemma~\ref{lemfz}.

Let $\tilde{f}_{\widehat{m},n}$ be the number of maps with $n$ edges and pattern $\widehat{m}$ occurring at the root and denote the corresponding generation function of $\big( \tilde{f}_{\widehat{m},n} \big)_{n \geq 0}$ by $\mathbb{F}_{\widehat{m}}(z)$, i.e.,
\begin{align*}
\mathbb{F}_{\widehat{m}}(z):=\sum_{n\ge 0}\tilde{f}_{\widehat{m},n}\,z^n.
\end{align*}

\begin{lemma}\label{bij2}
Let $\widehat{m}$ be a pattern with $k$ inner-edges, $s$ outer-edges, and the outer face of valency $\ell$. Then, we have
\begin{align*}
\mathbb{F}_{\widehat{m}}(z)=z^{k-s}\,F_{\ell}(z),
\end{align*}
where $F_{\ell}(z)$ is the ordinary generation function of the number of maps with the root faces being pure $\ell$-gons given in (\ref{fmz}).
\end{lemma}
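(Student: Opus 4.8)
The plan is to establish a bijection between maps of size $n$ with the pattern $\widehat{m}$ occurring at the root and maps of size $n-(k-s)$ whose root face is a pure $\ell$-gon, and to show this bijection respects the generating-function bookkeeping so that multiplying by $z^{k-s}$ exactly accounts for the size shift. The key observation is that a pattern at the root is fully determined, up to the ``rest'' of the map, by its root face: the inner-faces $F^*(\widehat m)$ that must sit inside $M$, together with $V(\widehat m)\subseteq V(M)$, $E(\widehat m)\subseteq E(M)$, and the matching root edges. I would first analyze the local structure of $\widehat m$ around its root face. The root face of $\widehat m$ has valency $\ell$; walking along its boundary we see a closed walk of length $\ell$. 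Edges of $\widehat m$ split into: inner-edges (not incident to the root face) of which there are $k$; outer-edges (bridges incident to the root face) of which there are $s$; and the remaining edges, which form the ``pure'' part of the boundary. The point is that the boundary walk, after contracting the bridge excursions, is a simple cycle bounding a pure $\ell$-gon.

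First I would make precise the forward map. Given a map $M$ of size $n$ with $\widehat m$ at the root, delete the $k$ inner-edges of $\widehat m$ and the $s$ outer-edges of $\widehat m$ from $M$; since each deletion of an outer-edge (a bridge incident to the root face) and each deletion of an inner-edge changes the root face in a controlled way — an inner-edge deletion merges the root face with an adjacent inner-face, an outer-edge (bridge) deletion prunes a subtree-like excursion hanging off the root-face boundary — one checks that the result $M'$ is a map of size $n-k-s$ whose root face is now bounded by a simple $\ell$-cycle, i.e.\ a pure $\ell$-gon with the correct root edge. Wait — the claimed shift is $z^{k-s}$, not $z^{k+s}$, so the accounting must be that deleting an outer-edge removes one edge but the reverse operation of re-attaching an $s$-fold bridge structure is absorbed differently; more carefully, the right statement is that the generating function for the ``extra'' structure attached at outer-edges is itself counted among maps, contributing a factor that, combined with the $z^{-s}$ from not charging those edges, yields net $z^{k-s}$. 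I would reconcile this by tracking, in the BDG/quadratic-method generating-function framework of Section~\ref{GFPM}, exactly which substructures are free and which are forced: the $\ell$-gon boundary and the $k$ forced inner-edges with their forced inner-faces are rigid (contributing $z^k \cdot F_\ell(z)$ after the star-type argument), while each outer-edge is a bridge whose removal and reattachment is a reversible operation on the class $\mathscr M$ itself, and this is where the $z^{-s}$ enters — the reattachment ``uses up'' edges already counted. The cleanest route is to mimic the star-inserting proof of Lemma~\ref{lemfz}: define an insertion operation that takes a pure-$\ell$-gon-rooted map, inserts the rigid inner part of $\widehat m$ (costing $z^k$), and then for each outer-edge performs the reverse-bridge operation (a decomposition $\mathscr M \to \mathscr M \times \mathscr M$ at a bridge, reflected in the functional equation~\eqref{m1}), netting the $z^{-s}$.

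The main steps, in order: (1) describe the canonical decomposition of a pattern $\widehat m$ at the root into its pure $\ell$-gon ``skeleton,'' its $k$ inner-edges (with the inner-faces they bound), and its $s$ outer-edge bridges; (2) show that given $M$ with $\widehat m$ at the root, stripping these pieces in a prescribed order yields a well-defined map $M'$ of the appropriate size with root face a pure $\ell$-gon; (3) show the reverse — inserting the pieces back — is well-defined and inverse to stripping, using that the attachment points are forced by the incidence structure of $\widehat m$ (this is where one must be careful that no choices are introduced, unlike in the star-insertion of Lemma~\ref{lemfz} where choices were deliberately summed); (4) translate the bijection into the identity $\tilde f_{\widehat m, n} = f_{\ell,\, n-(k-s)}$ (or the suitably-shifted version accounting for the bridge expansions), hence $\mathbb F_{\widehat m}(z) = z^{k-s} F_\ell(z)$.

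The hard part will be step (3), specifically pinning down why the outer-edges contribute $z^{-s}$ rather than $z^{+s}$, and making the reattachment of outer-edge excursions genuinely canonical (choice-free) so that we get an honest bijection and not merely an asymptotic or a generating-function identity up to lower-order terms. The subtlety is that an outer-edge is a bridge incident to the root face, and the component of $M$ hanging off it beyond $\widehat m$ is an arbitrary map; in the stripping direction we must remember that component and where it attached, and in the reconstruction direction we must re-glue it in the unique position dictated by $\widehat m$'s boundary walk. I expect that once the boundary-walk structure of $\widehat m$ is correctly formalized — identifying the $\ell$ corners of the root face, which of the incident edges are bridges, and the cyclic order — the attachment positions become forced and the $z$-exponent bookkeeping falls out from comparing edge counts on both sides, using $|E(M)| = |E(M')| + k + s$ on the nose but with the $s$ bridge-components re-counted into the $F_\ell$ side, giving the net $z^{k-s}$. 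I would double-check the exponent on a small example (e.g.\ the examples promised in Section~\ref{POPT}) before committing to the sign.
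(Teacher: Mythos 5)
Your overall plan---a size-tracking bijection between maps with $\widehat m$ at the root and maps whose second (equivalently, root) face is a pure $\ell$-gon---matches the paper's, but your treatment of the outer-edges contains a genuine gap, which you flag yourself (``the hard part'') without resolving. The missing idea is purely local: an outer-edge of $\widehat m$ is a bridge of $\widehat m$ incident to its root face, so it is traversed \emph{twice} in the boundary walk of that face. Consequently the root face of $\widehat m$ has valency $\ell$ but only $\ell-s$ \emph{distinct} edges on its boundary, and $\widehat m$ has $\ell+k-s$ edges in total. The correct bijection replaces the pure $\ell$-gon bounding the second face of a map $M'$ (which has $\ell$ distinct edges) by the length-$\ell$ boundary walk of $\widehat m$'s root face, matching the $\ell$ corners; the $s$ bridges force $s$ pairs of $\ell$-gon edges to be merged, and the $k$ inner-edges are then inserted. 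The edge count changes by $k-s$, giving $\tilde f_{\widehat m,\,n+k-s}=f_{\ell,n}$ and hence the factor $z^{k-s}$. Nothing is ``re-counted into the $F_\ell$ side,'' and no bridge components are remembered or re-glued.

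Your stripping direction is also incorrect as stated: an outer-edge of $\widehat m$ need not be a bridge of the ambient map $M$ (see $e_2$ in Figure~\ref{p_i}), and in general no component of $M$ ``hangs off'' it---the entire complement of $\widehat m$ in $M$ sits inside the root face of $\widehat m$ and is attached along its boundary walk. Deleting the $s$ outer-edges therefore neither prunes excursions nor produces a face bounded by a simple $\ell$-cycle. What one must do instead is remove the $k$ inner-edges and \emph{split} each of the $s$ boundary bridges into two parallel boundary edges (the inverse of the merging above), which turns the root face of $\widehat m$ into a pure $\ell$-gon. Once set up this way the correspondence is canonical in both directions (no choices arise, unlike in the star-insertion of Lemma~\ref{lemfz}), and the exponent $k-s$ falls out of the exact edge count $|E(\widehat m)|-\ell=k-s$, not from any asymptotic or generating-function manipulation.
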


We first present the idea of the construction for proving Lemma~\ref{bij2}.
First align the root edges of these two maps and then merge $\ell$ edges from the outer face of the pattern and $\ell$ edges of the second face of the map. 
Then, these two maps will be merged into a new map with the fixed pattern occurring at its root.

Note that it seems that we just merge those $\ell$ edges by keeping the relative position and put those inner-edges into the suitable location. However, we need to be careful of cases where the patterns have some outer-edges.
In fact, a fixed pattern with root face valency $\ell$ and $s$ outer-edges only has $\ell-s$ different edges on its boundary. So when we embed a fixed pattern with boundary length $\ell$, $k$ inner-edges and $s$ outer-edges into the pure $\ell$-gon, we have to take out $s$ edges (merge $s$ pairs of edges) and put $k$ edges (corresponding inner-edges) into this face.

We now give an example (see Figure \ref{p_i}), the pattern $P$ has boundary length $8$ and an inner-edge and an outer-edge.
We can put one edge into the map $M'$ whose second face is a pure $8$-gon and merge a pair of edges (because $e_2$ is an outer-edges) such that the pattern $P$ occurs at the root of the resulting map $M$.

\begin{center}
\begin{figure}[H]
\begin{minipage}{1\textwidth}
    \begin{center}
        {\includegraphics[width=0.95\textwidth]{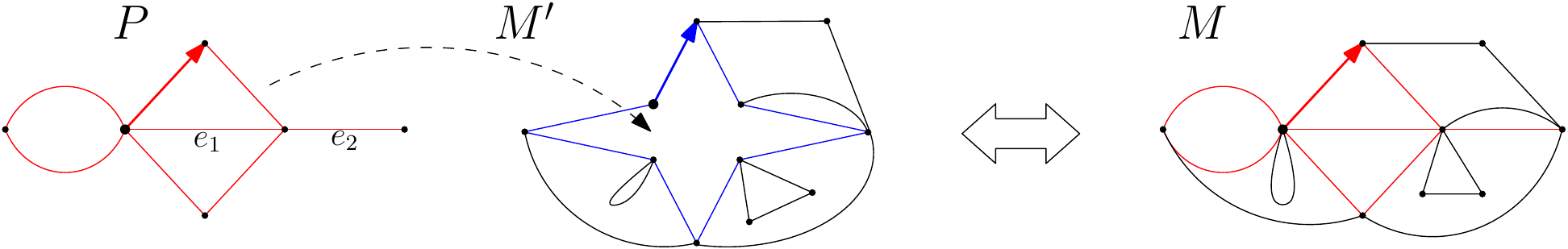}}
        \end{center}
\end{minipage}
\caption{The map $P$ has its root face of valency $8$. The second face of the map $M'$ is a pure $8$-gon.
The map $P$ is a pattern occurring at the root of the map $M$.
The edge $e_1$ is an inner-edge of the map $P$ and the edge $e_2$ is its outer-edge.}
\label{p_i}
\end{figure}
\end{center}

\begin{proof}[Proof of Lemma~\ref{bij2}] 
Suppose $\widehat{m}$ is a pattern with outer face valency $\ell$, $k$ inner-edges, and $s$ outer-edges. By following the construction above, 
we know that a map with $\widehat{m}$ at the root can be constructed by the map with the second face a pure $\ell$-gon. 
And actually, each map with $\widehat{m}$ at the root has a unique way to be constructed by its corresponding map whose second face is a pure $\ell$-gon.
This means that there is a bijection between them (for example $M$ and $M'$ in Figure \ref{p_i}). 
We therefore have that the number of maps with $n$ edges and whose second faces are pure $\ell$-gons is the same as the number of maps with $n+k-s$ edges 
and with $\widehat{m}$ occuring at the root. 
We can easily reverse roles of the root face and the second face in one map by changing the direction of its root edge, this means the number of maps with $n$ edges and whose second faces are pure $\ell$-gons is also $f_{\ell,n}$. Summarizing everything above, we obtain that
\[
\tilde{f}_{\widehat{m},n+k-s}=f_{\ell,n},
\]
and for the ordinary generating function, we have
\[
\mathbb{F}_{\widehat{m}}(z)=\sum_{n\ge 0}\tilde{f}_{\widehat{m},n+k-s}\,z^{n+k-s}=\sum_{n\ge 0}f_{\ell,n}\,z^{n+k-s}=z^{k-s}\, F_{\ell}(z).
\]
This concludes the proof.
\end{proof}

We again take the map $P$ in Figure~\ref{p_i} as an example. When considering the pattern $P$, we easily check that each map $M$ corresponds to a unique map $M'$ and vice versa, and owing to $k=s=1$, we have  $\tilde{f}_{P,n+1-1}=f_{8,n}$ which leads to $\mathbb{F}_P(z)=F_{8}(z)$.

\begin{proof}[Proof of Proposition~\ref{patthm}] 

For a given pattern $\widehat{m}$ with outer face valency $\ell$, $k$ inner-edges and $s$ outer-edges, the probability that this pattern occurs at the root of a random map with $n$ edges is equal to 
\begin{align*}
P_{\widehat{m},n}=\frac{\tilde{f}_{\widehat{m},n}}{m_n}=\frac{[z^{n}]\mathbb{F}_{\widehat{m}}(z)}{[z^n]M(z,1)}=\frac{[z^n]z^{k-s}F_{\ell}(z)}{[z^n]M(z,1)}.
\end{align*}
By Lemma~\ref{bij2} and as $n\to\infty$, 
we have the following limiting probability that
\begin{align*}
P_{\widehat{m}}:=&\lim_{n\to\infty}P_{\widehat{m},n}
=\lim_{n\to\infty}\frac{[z^{n}]\mathbb{F}_{\widehat{m}}(z)}{[z^n]M(z,1)}
=\lim_{n\to\infty}\frac{[z^n]z^{k-s}F_{\ell}(z)}{[z^n]M(z,1)}.
\end{align*}
We again apply the Transfer Theorem as we did in the proof of Proposition~\ref{tgn} and obtain that
\begin{align*}
P_{\widehat{m}}
=\frac{1}{12^{\ell+k-s}(\ell-1)!} \left(\frac{\partial^{\ell-1}}{\partial u^{\ell-1}}u^{\ell-1} \frac{a_3(u)}{a_3}\right)\bigg|_{u=1}
=\xi_{\ell}\left(\frac{1}{12}\right)^{k-s},
\end{align*}
which is clearly a positive rational number.
\end{proof}

\section{Pattern occurrences}\label{POPT}

In this section, we consider the global problem for the number of occurrences of a given pattern in a random planar map. This is done by the rerooting method, which extends results from Section~\ref{LPP} concerning occurrences of a given pattern at the root of an arbitrary map.

The idea of rerooting consists in considering all possible choices for the root in those maps that have a pattern occurrence at the root in some bigger map and then mapping the set of these \emph{rerooted} maps bijectively to the set of all maps that have occurrences of these maps (not necessarily at the root). We will explain this concept more clearly in the proof of Proposition~\ref{poct}.

A {\em rerooting} of a map consists of choosing an edge from this map and directing it so that it becomes the root edge of the newly created map.
The difference of ``$P$ is a pattern in $M$" and ``$P$ is a pattern at the root of $M$" is that the first one relaxes the location of the root edge of $M$ from the second one, which means we can obtain the first one by rerooting the $M$ of the second one.

\begin{remark}
In order to keep the last condition $F^*(P)\subseteq F^*(M)$ of ``a pattern $P$ in a map $M$" in Definition~\ref{defpp}, 
we have a limitation of rerooting $M$, namely, the new root face $f$ of the obtained map is not allowed to be an inner-face of $P$, for
otherwise we have $f\in F^*(P)$ and $f\notin F^*(M)$ which is not allowed. 
\end{remark}

In the following examples, the map $P$ is a pattern that occurs at the root of the map $M$, and if we further reroot $M$ by choosing $e_1$ as the new root edge, then we obtain the new map $M_1$ which has the pattern $P$ occurring in it.
In addition, we give also an example of an illegal rerooting, namely, if $e_2$ is chosen as the new root edge, then the new root face of $M_2$ is the inner-face of $P$ which leads to $M_2$ not having $P$ as a pattern.
The reason is that if the new root face is an inner-face of $P$, then
the shape of $P$ is broken when we consider the new root face as outer face.
\begin{center}
\begin{figure}[H]
\begin{minipage}{1\textwidth}
    \begin{center}
        {\includegraphics[width=1\textwidth]{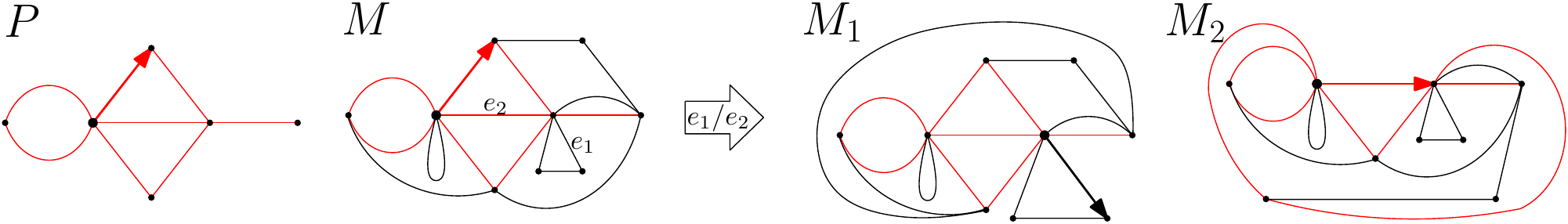}}
        \end{center}
\end{minipage}
\end{figure}
\end{center}

Next, we say that a map $\widehat{m}$ is a {\em marked pattern} in a map $M$ if 
one of the patterns in $M$ isomorphic to $\widehat{m}$ is chosen and marked and its root edge direction is erased.
In other words, since a map $\widehat{m}$ might occur in a map as a pattern many times, we distinguish one of the occurrences and forget the direction of its root edge.
We also say that an edge $e$ is a {\em marked edge} in a map $M$ if $e$ is a distinguished edge in $M$ 
to which a direction is added.

A rerooting of a map $M$ is said to be {\em rotational} if the root face of the rerooted map is the root face of $M$.
Two maps $M$ and $M'$ are {\em rotationally isomorphic} if $M'$ is obtained by a rotational rerooting of $M$ and $M'$ is isomorphic to $M$.

Let $t_{\widehat{m},n}$ be the number of maps with $n$ edges and a marked pattern $\widehat{m}$.
We denote by $\mathbb{T}_{\widehat{m}}(z)$ the ordinary generation function 
\begin{align}\label{tgzd}
\mathbb{T}_{\widehat{m}}(z):=\sum_{n\ge 0}t_{\widehat{m},n}z^n.
\end{align}

\begin{proposition}\label{poct}
For every map $\widehat{m}$, the following equation holds:
\begin{align*}
\mathbb{T}_{\widehat{m}}(z)=|\mathscr{R}_{\widehat{m}}|^{-1}\left(2z\cdot\left(\cfrac{d}{dz}\mathbb{F}_{\widehat{m}}(z)\right)-|\mathscr{I}^{\Sigma}_{\widehat{m}}|\cdot \mathbb{F}_{\widehat{m}}(z)\right).
\end{align*}
where $|\mathscr{I}^{\Sigma}_{\widehat{m}}|$ denotes the sum of valencies of inner-faces of $\widehat{m}$ and $|\mathscr{R}_{\widehat{m}}|$ is the number of rotationally isomorphic maps of $\widehat{m}$.
\end{proposition}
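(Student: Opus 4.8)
The plan is to count pairs $(M, \widehat{m})$, where $M$ is a map with $n$ edges and $\widehat{m}$ is a marked pattern in $M$, by relating them to maps in which $\widehat{m}$ occurs \emph{at the root}, i.e.\ to objects counted by $\mathbb{F}_{\widehat{m}}(z)$. The first step is to introduce an intermediate class: maps $M$ with a \emph{marked pattern} $\widehat{m}$ together with a choice of root edge (a \emph{marked edge}). A map with a marked pattern and a marked edge is, up to the direction of that edge, the same as a map with a pattern occurrence \emph{at the root} after rerooting along the marked edge — provided the rerooting is legal, i.e.\ the new root face is not an inner-face of $\widehat{m}$ (see the Remark preceding the statement). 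So I would first argue that the number of (map, marked pattern, directed marked edge) triples on $n$ edges equals $2n\cdot[z^n]\mathbb{F}_{\widehat{m}}(z)$ \emph{minus} a correction for the illegal rerootings, then divide out the redundancy coming from rotational symmetries of $\widehat{m}$.

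Concretely, here are the steps in order. (i) Start from the generating function $\mathbb{F}_{\widehat{m}}(z) = \sum_n \tilde f_{\widehat{m},n} z^n$ counting maps with $\widehat{m}$ at the root; each such map has $n$ edges, giving $2n$ directed edges, so $2z\,\frac{d}{dz}\mathbb{F}_{\widehat{m}}(z) = \sum_n 2n\,\tilde f_{\widehat{m},n} z^n$ counts maps-with-$\widehat{m}$-at-the-root together with an arbitrary chosen directed edge. (ii) Forget the original root direction: such a triple is determined by the underlying map $M$, the marked (undirected) pattern occurrence, and the chosen directed edge — but we have overcounted, because the "chosen directed edge" includes choices whose associated rerooting is illegal (the new root face is an inner-face of the marked $\widehat{m}$). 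The number of directed edges whose left face is an inner-face of the occurrence equals exactly $|\mathscr{I}^{\Sigma}_{\widehat{m}}|$, the sum of the valencies of the inner-faces of $\widehat{m}$ — each inner-face of valency $j$ contributes $j$ directed edges having that face on the left. Subtracting these for every map-with-$\widehat{m}$-at-the-root gives $2z\,\frac{d}{dz}\mathbb{F}_{\widehat{m}}(z) - |\mathscr{I}^{\Sigma}_{\widehat{m}}|\,\mathbb{F}_{\widehat{m}}(z)$, now counting maps with a marked (undirected) pattern $\widehat{m}$ together with a legally chosen directed root edge. (iii) Finally, forgetting the directed root edge: reversing step (ii), a map with a marked pattern $\widehat{m}$ and a legal directed root edge \emph{is} (via the rerooting bijection) a map with $\widehat{m}$ occurring at the root plus a chosen directed edge, and each isomorphism class of map-with-marked-pattern arises this way exactly $|\mathscr{R}_{\widehat{m}}|$ times, where $|\mathscr{R}_{\widehat{m}}|$ is the number of rotationally isomorphic copies of $\widehat{m}$ — because rerooting the occurrence around its own root face produces $|\mathscr{R}_{\widehat{m}}|$ equivalent pointings that all yield the same marked pattern. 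Dividing by $|\mathscr{R}_{\widehat{m}}|$ yields $\mathbb{T}_{\widehat{m}}(z)$, which is the claimed identity.

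The main obstacle, and the step needing the most care, is the bookkeeping in (ii)–(iii): making precise the bijection between "map with $\widehat{m}$ at the root $+$ extra directed edge" and "map with marked pattern $+$ legal directed root edge", and in particular verifying that the overcount factor is \emph{exactly} $|\mathscr{R}_{\widehat{m}}|$ and not something depending on $M$. The subtlety is that a rotational rerooting of the occurrence $\widehat{m}$ inside $M$ might coincide with the identity for some special $M$, or two different pointings of $M$ might give isomorphic marked maps for reasons unrelated to the symmetry of $\widehat{m}$; one must check these do not occur, so that the fibers of the forgetful map all have the same size $|\mathscr{R}_{\widehat{m}}|$. I would handle this by working at the level of rooted maps (where automorphism groups are trivial) and only passing to isomorphism classes at the very end, using that the root of $M$ breaks all symmetry except precisely the rotational symmetry of the marked pattern around its root face. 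The identification of the illegal-rerooting count with $|\mathscr{I}^{\Sigma}_{\widehat{m}}|$ in step (ii) is then a straightforward local count of corners/directed edges incident to inner-faces of $\widehat{m}$.
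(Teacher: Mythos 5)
Your proposal is correct and follows essentially the same route as the paper: the paper likewise introduces the intermediate class of maps with $\widehat{m}$ at the root and a marked directed edge whose left face is not an inner-face of $\widehat{m}$, counts it as $(2n-|\mathscr{I}^{\Sigma}_{\widehat{m}}|)\tilde{f}_{\widehat{m},n}$, and then establishes the one-to-$|\mathscr{R}_{\widehat{m}}|$ correspondence with maps carrying a marked pattern via the same rerooting argument. Your steps (i)--(iii) match the paper's decomposition exactly, including the identification of the illegal rerootings with corners of inner-faces and the division by the rotational symmetry of $\widehat{m}$.
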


\begin{proof}
We claim that
the sequence $(t_{\widehat{m},n})_{n \ge 0}$ and the sequence $(\tilde{f}_{\widehat{m},n})_{n \ge 0}$ satisfy the following relation:
\[
t_{\widehat{m},n}=|\mathscr{R}_{\widehat{m}}|^{-1}\cdot{\left(2n-|\mathscr{I}^{\Sigma}_{\widehat{m}}|\right)\cdot\tilde{f}_{\widehat{m},n}}.
\]
We denote by $\mu_{\widehat{m},n}$ the number of maps with $n$ edges, a pattern $\widehat{m}$ occurring at the root, and a marked edge 
whose left face is not an inner-face of $\widehat{m}$.

To prove the claim, we first relate the sequences $(\mu_{\widehat{m},n})_{n \ge 0}$ and $(\tilde{f}_{\widehat{m},n})_{n \ge 0}$.
In order to mark an edge in a map, 
we pick one of the $n$ edges of a given map and direct it,
which gives $2n$ possibilities. 
Next, we have to exclude $|\mathscr{I}^{\Sigma}_{\widehat{m}}|$ of them, 
as they correspond to cases when a left face of a marked edge is an inner-face of $\widehat{m}$.
This leads to that for every map $\widehat{m}$, the relation between $\mu_{\widehat{m},n}$ and $\tilde{f}_{\widehat{m},n}$ is
\[
\mu_{\widehat{m},n}=\left(2n-|\mathscr{I}^{\Sigma}_{\widehat{m}}|\right)\cdot\tilde{f}_{\widehat{m},n}.
\]

We next relate sequences $(\mu_{\widehat{m},n})_{n \ge 0}$ and $(t_{\widehat{m},n})_{n \ge 0}$, 
where we recall that for a given map $\widehat{m}$, 
the sequence $(\mu_{\widehat{m},n})_{n\ge 0}$ enumerates:\\
\textbf{(a)} maps with $n$ edges, 
the pattern $\widehat{m}$ occurring at the root,
and a marked edge whose left face is not an inner-face of $\widehat{m}$;\\ 
while the sequence $(t_{\widehat{m},n})_{n\ge 0}$ enumerates:\\
\textbf{(b)} maps with $n$ edges with the marked pattern $\widehat{m}$.

Here we show how to associate $|\mathscr{R}_{\widehat{m}}|$ maps from \textbf{(a)} with one map from \textbf{(b)}.

\paragraph{\textbf{(a)}$\to$\textbf{(b)}}
Let us take a set of maps from class \textbf{(a)} which are equivalent up to
rotational rerooting of the marked pattern $\widehat{m}$.
First, we reroot these maps in such a way that their marked edges become new root edges.
Like that, in each so obtained map the left-hand side of the new root edge is not an
inner-face of $\widehat{m}$.
Next, we erase the direction of the root edge of $\widehat{m}$ in all considered maps.
This gives us exactly one map with pattern $\widehat{m}$ marked.
 
\paragraph{\textbf{(b)}$\to$\textbf{(a)}}
We now reverse the construction. We first root the marked pattern $\widehat{m}$ of a map from class \textbf{(b)}
in order to obtain a map with the pattern $\widehat{m}$ occurring at the root after rerooting.
To do so, we choose a new root edge from edges of the outer face (boundary, clockwise) of the marked pattern 
(we have $|\mathscr{R}_{\widehat{m}}|$ different choices) and reroot the map. 
We thus obtain a map with the pattern $\widehat{m}$ occurring at the root
and a marked edge (the original root edge) whose left face (the original root face) 
is not an inner-face of $\widehat{m}$.

Consequently, we have a one-to-$|\mathscr{R}_{\widehat{m}}|$ correspondence between maps from \textbf{(a)} and \textbf{(b)}.
This leads to that for every map $\widehat{m}$, the relation between
$\mu_{\widehat{m},n}$ and $t_{\widehat{m},n}$ is
\[
\mu_{\widehat{m},n}=|\mathscr{R}_{\widehat{m}}| \cdot t_{\widehat{m},n}
\]

We finish the proof of the claimed result by combining all results above and by rewriting the claim by means of ordinary generation functions.
\end{proof}

\section{Submap occurrences}\label{SOPT}

In this section, we investigate the general problem of enumerating occurrences of a given submap in a random planar map. 
One can obtain ``$S$ is a pattern in $M$" from ``$S$ is a submap at the root of $M$" by rerooting $M$, where this rerooting again has to satisfy the condition that the root face of $M$ is not (or is not a part of) the inner-face of $S$ as was the case in Section~\ref{POPT}.
Let us we mention that, in contrast to computing pattern occurrences, 
we will not enumerate submap occurrences by rerooting so that the submaps occur at the root---another method will be presented in the proof of Proposition~\ref{soct}.

In the following examples, the map $S$ is a submap that occurs at the root of the map $M$, 
and if we further reroot $M$ by choosing $e_1$ as the new root edge, 
then we obtain the new map $M_1$ which has the submap $P$ occurring in it.
In addition, we also give an example of an illegal rerooting, namely, if $e_2$ is chosen as the new root edge, then
$f'$ (the new root face of $M$) is part of $f$ (the inner-face of $S$)
which leads to $M_2$ not having $S$ as a submap.
The reason is that if the new root face is contained in the inner-face of $S$, then
the shape of $S$ is broken when we consider the new root face as outer face.
\begin{center}
\begin{figure}[H]
\begin{minipage}{1\textwidth}
    \begin{center}
        {\includegraphics[width=1\textwidth]{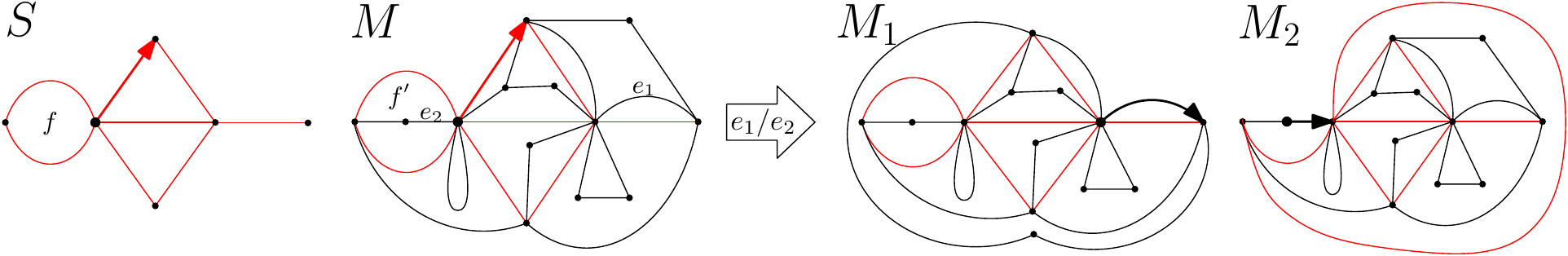}}
        \end{center}
\end{minipage}
\end{figure}
\end{center}

\begin{remark}
In both cases, ``occurrences at the root of a map'' and ``occurrences in a map'', patterns are special cases of submaps: they simply have no additional edges or vertices in their inner-faces.
\end{remark}

Below, this remark will help us to understand the  idea of constructing submaps from patterns.
Here we also mention that a pattern or submap might occur in a map many times. In the following figure, we give three examples of the submap $S$ occuring in the map $M$.
\begin{center}
\begin{figure}[H]
\begin{minipage}{1\textwidth}
    \begin{center}
        {\includegraphics[width=1\textwidth]{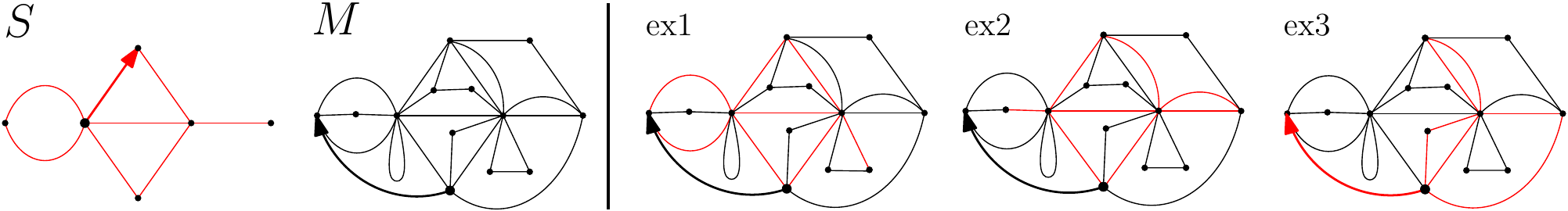}}
        \end{center}
\end{minipage}
\end{figure}
\end{center}

Similar to Section~\ref{POPT}, we consider all possible pairs of the root of a map and a submap in question.
As before, we say that a map $\widehat{m}$ is a {\em marked submap} of a map $M$ if 
one of the submap occurrences $\widehat{m}$ of the map $M$ is chosen and marked
and its root edge direction is erased. 

Let $s_{\widehat{m},n}$ be the number of maps with $n$ edges and a marked submap $\widehat{m}$.
We denoted by $\mathbb{S}_{\widehat{m}}(z)$ the ordinary generation function 
\begin{align}\label{sgzd}
\mathbb{S}_{\widehat{m}}(z):=\sum_{n\ge 0}s_{\widehat{m},n}\,z^n.
\end{align}
A formula for $\mathbb{S}_{\widehat{m}}(z)$ for any given map $\widehat{m}$ is as follows:
\begin{proposition}\label{soct}
Let $\widehat{m}$ be a map, $|\mathscr{I}_{\widehat{m}}|$ be the number of inner-faces of $\widehat{m}$,
and $\Omega_1$, $\Omega_2$, $\cdots$ , $\Omega_{|\mathscr{I}_{\widehat{m}}|}$ be valencies of all inner-faces of $\widehat{m}$. Then the generating function $\mathbb{S}_{\widehat{m}}(z)$ for the sequence enumerating maps with a marked submap $\widehat{m}$ is given by
\begin{align*}
\mathbb{S}_{\widehat{m}}(z)=\mathbb{T}_{\widehat{m}}(z) \cdot
\prod_{i=1}^{|\mathscr{I}_{\widehat{m}}|}\displaystyle z^{-\Omega_i}F_{\Omega_i}(z),
\end{align*}
where $\mathbb{T}_{\widehat{m}}(z)$ is defined in (\ref{tgzd}) and $F_{\ell}(z)$ is defined in (\ref{fmz}).
\end{proposition}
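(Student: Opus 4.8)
The plan is to establish a bijection between the class of maps counted by $s_{\widehat{m},n}$ and a class of tuples built from a map carrying a marked \emph{pattern} $\widehat{m}$ together with one ``filling'' map inserted into each inner-face of $\widehat{m}$. The key observation, recorded in the remark above, is that a submap occurrence of $\widehat{m}$ is nothing but a pattern occurrence of $\widehat{m}$ whose inner-faces have been allowed to contain arbitrary extra edges and vertices. So, starting from a map $M$ with a marked submap $\widehat{m}$, I would, for each inner-face $f_i$ of the marked $\widehat{m}$ (of valency $\Omega_i$), cut along the boundary of $f_i$ and detach whatever map lies inside it; what remains is a map with the marked \emph{pattern} $\widehat{m}$, and what is detached is, for each $i$, a map whose root face is a pure $\Omega_i$-gon (the boundary cycle of $f_i$), after declaring a root edge on that boundary in a canonical way (say, inherited from the orientation of $\widehat{m}$'s boundary walk around $f_i$). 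This is exactly the inverse of the star-/pattern-insertion constructions used in Lemmas~\ref{lemfz} and~\ref{bij2}: inserting a map into an inner-face versus deleting it.

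The main steps, in order, are: (1) make precise the canonical choice of root edge and orientation on each inner-face boundary of the marked $\widehat{m}$, so that the detached pieces are genuinely rooted maps with pure $\Omega_i$-gon root faces --- here one uses that in a pattern occurrence each inner-face of $\widehat{m}$ is a face of $M$, hence a well-defined region to be filled; (2) check that the decomposition ``detach the contents of all inner-faces'' is well-defined and reversible: the reverse operation glues a rooted map with root face a pure $\Omega_i$-gon into the $i$-th inner-face along the boundary, matching root edges, which is well-defined precisely because a pure $\Omega_i$-gon has $\Omega_i$ distinct boundary edges with no identifications; (3) translate the resulting bijection into generating functions. Since gluing a map with $n_i$ edges into the inner-face $f_i$ adds $n_i - \Omega_i$ edges (the $\Omega_i$ boundary edges are shared, not new), the generating-function contribution of the $i$-th filling is $z^{-\Omega_i}F_{\Omega_i}(z)$, and since the fillings of distinct inner-faces are independent, the contributions multiply. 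Combining with the generating function $\mathbb{T}_{\widehat{m}}(z)$ for maps with a marked pattern $\widehat{m}$ yields
\begin{align*}
\mathbb{S}_{\widehat{m}}(z)=\mathbb{T}_{\widehat{m}}(z)\cdot\prod_{i=1}^{|\mathscr{I}_{\widehat{m}}|}z^{-\Omega_i}F_{\Omega_i}(z).
\end{align*}

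The part requiring the most care is the well-definedness and reversibility of the cut/glue operation in the presence of symmetries and degenerate fillings. One must ensure that ``marking'' is handled consistently on both sides: a marked submap $\widehat{m}$ in $M$ erases the root-edge direction of $\widehat{m}$, and the same is true for the marked pattern $\widehat{m}$ left after detaching the fillings, so the marking bookkeeping is inherited automatically and contributes no extra symmetry factor (that factor, $|\mathscr{R}_{\widehat{m}}|^{-1}$, is already baked into $\mathbb{T}_{\widehat{m}}(z)$ via Proposition~\ref{poct}). I also need the edge $e_i$ that I canonically pick on the boundary of $f_i$ to be unambiguous; since the boundary walk of $\widehat{m}$ around an inner-face is part of the data of $\widehat{m}$ as an embedded rooted map, there is a canonical first edge, so no additional choices enter. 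The degenerate case where a filling is the trivial one-vertex map (contributing the constant term of $z^{-\Omega_i}F_{\Omega_i}(z)$) should be checked to correspond exactly to ``the inner-face $f_i$ of $M$ is empty,'' i.e.\ $\widehat{m}$ occurs already as a pattern; this is consistent with the remark that patterns are the submaps with empty inner-faces. Once these points are verified, the generating-function identity follows immediately from the multiplicativity of the disjoint insertions.
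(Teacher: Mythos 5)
Your proposal is correct and follows essentially the same route as the paper: the paper also decomposes a map with a marked submap $\widehat{m}$ into a map with a marked pattern $\widehat{m}$ together with one map with a pure $\Omega_i$-gon root face inserted into each inner-face (the insertion/detachment being the same bijection you describe, with the same edge-count bookkeeping $n_i-\Omega_i$ giving the factor $z^{-\Omega_i}F_{\Omega_i}(z)$), and then passes to generating functions via the resulting convolution. Your additional remarks on the canonical root edge and the marking bookkeeping correspond to the paper's observation that the choice of boundary edge for the merge is immaterial.
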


\begin{proof}
We first focus on the relation between the sequence $(s_{\widehat{m},n})_{n \ge 0}$ and the sequence $(t_{\widehat{m},n})_{n \ge 0}$ and claim that
for any map $\widehat{m}$,
\begin{align}\label{formula}
s_{\widehat{m},n}=\sum_{\substack{k,k_1,k_2,\dots,k_{\gamma}\ge 0 \\  k+k_1+k_2+\cdots +k_{\gamma}=n}}\left(t_{\widehat{m},k}\prod_{i=1}^{\gamma}f_{\Omega_i,k_i+\Omega_i}\right),
\end{align}
where we recall that $t_{\widehat{m},n}$ is the number of maps with $n$ edges and a marked pattern $\widehat{m}$, $f_{\ell,n}$ is the number of maps with $n$ edges and their root faces being pure $\ell$-gons, and $|\mathscr{I}_{\widehat{m}}|=\gamma$.

The idea of showing the claim is to construct a map $M$ with a marked submap $\widehat{m}$ by inserting $\gamma$ maps whose outer faces are pure polygons into the inner-face of $\widehat{m}$ where $\widehat{m}$ is a marked pattern of $M$. 
In other words, we can decompose a map $M$ with a marked submap $\widehat{m}$ into a map $M$ with a marked pattern $\widehat{m}$ and $\gamma$ maps whose outer faces are pure polygons.
This construction gives us a way to determine a relation between $s_{\widehat{m},\bullet}$ and $t_{\widehat{m},\bullet}$.

Suppose we have a map with a marked pattern $\widehat{m}$, 
where $\widehat{m}$ has $\gamma$ inner-faces with valencies $\Omega_1$, $\Omega_2$, $\dots$ , $\Omega_{\gamma}$ (order is not important). 
After inserting maps with outer faces being pure $\Omega_1$-gon, pure $\Omega_2$-gon, $\dots$, pure $\Omega_{\gamma}$-gon into those inner-faces, we have a map with a marked submap $\widehat{m}$.

Now, we first compute how many different ways there are when inserting
a random map with $n$ edges and outer face a pure $\ell$-gon
into an existing inner-face of valency $\ell$. 
We have $f_{\ell,n}$ different maps with $n$ edges and root faces pure $\ell$-gons
and we just randomly choose an edge $e$ from this inner-face and insert each of those maps by 
merging $e$ with the root edge of each map and do the same as we did in the proof of Lemma~\ref{bij2}.
Note that it seems we have $\ell$ different ways to insert $M$, but actually it does not matter which edge of inner-face is chosen since of them leads to the same result and covers all cases.

A map with $n$ edges and outer (root) face being a pure $\ell$-gon has $n-\ell$ inner-edges.
Thus, if we want to insert this map into a face, we only need to add $n-\ell$ edges into this face.

If we insert $\gamma$ maps whose outer faces are pure $\Omega_1$-gon, pure $\Omega_2$-gon, $\dots$, pure $\Omega_{\gamma}$-gon and with, respectively, $k_1,k_2,\dots,k_{\gamma}$ inner-edges into a map with $k$ edges and a marked pattern $\widehat{m}$, we get a map with $k+k_1+k_2+\cdots +k_{\gamma}$ edges and a marked submap $\widehat{m}$.

Therefore, since
we have $t_{\widehat{m},k}$ maps with $k$ edges and a marked pattern $\widehat{m}$ and
$f_{\Omega_i,k_i+\Omega_i}$ maps whose outer (root) faces are pure $\Omega_i$-gons with $k_i$ inner-edges,
we obtain that there are 
\[
t_{\widehat{m},k}\prod_{i=1}^{\gamma}f_{\Omega_i,k_i+\Omega_i}
\]
maps with $k+k_1+k_2+\cdots +k_{\gamma}$ edges and a marked submap $\widehat{m}$ whose inner-faces 
contain $k_1,k_2,\dots,k_{\gamma}$ edges, respectively. 
Hence, by collecting all different cases of $k_1,k_2,\dots,k_{\gamma}$,
we finish the proof of the claim. 

Now, we deduce the generating function. From (\ref{formula}), we have
\begin{align*}
\mathbb{S}_{\widehat{m}}(z)=\sum_{n\ge 0}s_{\widehat{m},n}\,z^n=&\,\sum_{n\ge 0}\sum_{\substack{k,k_1,k_2,\dots,k_{\gamma}\ge 0 \\  k+k_1+k_2+\cdots +k_{\gamma}=n}}\left(t_{\widehat{m},k}\prod_{i=1}^{\gamma}f_{\Omega_i,k_i+\Omega_i}\right)\,z^n\\
=&\,\sum_{n\ge 0}\sum_{\substack{k,k_1,k_2,\dots,k_{\gamma}\ge 0 \\  k+k_1+k_2+\cdots +k_{\gamma}=n}}\left(t_{\widehat{m},k}\,z^k\prod_{i=1}^{\gamma}f_{\Omega_i,k_i+\Omega_i}\,z^{k_i}\right)\\
=&\,\left(\sum_{k\ge 0}t_{\widehat{m},k}\,z^k\right)\prod_{i=1}^{\gamma}\left(\sum_{k_i\ge 0}f_{\Omega_i,k_i+\Omega_i}\,z^{k_i}\right).
\end{align*}
Now, by observing that
\[
\sum_{k_i\ge 0}f_{\Omega_i,k_i+\Omega_i}\,z^{k_i}
=z^{-\Omega_i}\sum_{k_i\ge 0}f_{\Omega_i,k_i+\Omega_i}\,z^{k_i+\Omega_i}
=z^{-\Omega_i} F_{\Omega_i}(z)
\]
which comes from the definition of Lemma~\ref{lemfz}, the proof is finished.
\end{proof}

\section{Proof of the main results}\label{PF}

Now we are in the position to prove Theorems~\ref{thmpt} and~\ref{thmsm}.

\begin{proof}[Proof of Theorems ~\ref{thmpt} and~\ref{thmsm}]
The expected value of the number of occurrences of the pattern $\widehat{m}$ in a map with $n$ edges 
can be computed by comparing $t_{\widehat{m},n}$ of (\ref{tgzd}) and $m_n$ of (\ref{m0-gen2}). 
More precisely, for a random map with $n$ edges, the expected value of the number of occurrences
of the pattern $\widehat{m}$ is
\begin{align*}
\mathbb{E}[t(\widehat{m},M_n)]:=\frac{t_{\widehat{m},n}}{m_n}.
\end{align*}
For this purpose, we expand $\mathbb{T}_{\widehat{m}}(z)$ at its dominant singularity $z=1/12$ as follows:
\begin{align}\label{tme12}
\mathbb{T}_{\widehat{m}}(z)=\sum_{i\ge 0}\tau_{\widehat{m},i}\,(1-12z)^{i/2},
\end{align}
and by applying the Transfer Theorem, we obtain that
\begin{align*}
t_{\widehat{m},n}=[z^n]\mathbb{T}_{\widehat{m}}(z)= \,12^n\left(\frac{-\tau_{\widehat{m},1}}{2\sqrt{\pi}}\,n^{-3/2}+\frac{12\,\tau_{\widehat{m},3}-3\,\tau_{\widehat{m},1}}{16\sqrt{\pi}}\,n^{-5/2}+O(n^{-7/2})\right).
\end{align*}
By the well-known result \cite{GJ1983},
\begin{align*}
m_n=[z^n]M(z,1)= \,12^n\left(\frac{2}{\sqrt{\pi}}\,n^{-5/2}-\frac{25}{4\sqrt{\pi}}\,n^{-7/2}+O(n^{-9/2})\right),
\end{align*}
we obtain that
\begin{align}\label{tcc}
\mathbb{E}[t(\widehat{m},M_n)]=\frac{t_{\widehat{m},n}}{m_n}=\frac{-\tau_{\widehat{m},1}}{4}\,n+\frac{3\,\tau_{\widehat{m},3}-7\,\tau_{\widehat{m},1}}{8}+O(1/n).
\end{align}

Similarly, by comparing $s_{\widehat{m},n}$ of (\ref{sgzd}) and $m_n$ of (\ref{m0-gen2}), we know that
for a random map with $n$ edges, the expected number of occurrences of $\widehat{m}$ as a submap is equal to
\begin{align*}
\mathbb{E}[s(\widehat{m},M_n)]:=\frac{s_{\widehat{m},n}}{m_n}.
\end{align*}
Similar to the computation of $\mathbb{E}[t(\widehat{m},M_n)]$, we expand $\mathbb{S}_{\widehat{m}}(z)$ at its dominant singularity $z=1/12$ as follows:
\begin{align}\label{sme12}
\mathbb{S}_{\widehat{m}}(z)=\sum_{i\ge 0}\varrho_{\widehat{m},i}\,(1-12z)^{i/2},
\end{align}
and obtain that
\begin{align*}
s_{\widehat{m},n}=[z^n]\mathbb{S}_{\widehat{m}}(z)
=\,12^n\left(\frac{-\varrho_{\widehat{m},1}}{2\sqrt{\pi}}\,n^{-3/2}+\frac{12\,\varrho_{\widehat{m},3}-3\,\varrho_{\widehat{m},1}}{16\sqrt{\pi}}\,n^{-5/2}+O(n^{-7/2})\right).
\end{align*}
Thus, 
\begin{align}\label{scc}
\mathbb{E}[s(\widehat{m},M_n)]=\frac{s_{\widehat{m},n}}{m_n}=\frac{-\varrho_{\widehat{m},1}}{4}\,n+\frac{3\,\varrho_{\widehat{m},3}-7\,\varrho_{\widehat{m},1}}{8}+O(1/n).
\end{align}

In order to finish the proof, we still need to show that $c_1=-\tau_{\widehat{m},1}/4$ and $c_1'=-\varrho_{\widehat{m},1}/4$ are both positive rational numbers. Thus, we focus on proving that $\tau_{\widehat{m},1}\in \mathbb{Q}^-$ and $\varrho_{\widehat{m},1}\in \mathbb{Q}^-$.

\vspace{0cm}
\paragraph{\em Negativity and rationality of $\tau_{\widehat{m},1}$}
Here we again let $\widehat{m}$ be a map with $k$ inner-edges, $s$ outer-edges, and the outer face of valency $\ell$.
By Proposition~\ref{poct},
\begin{align}\label{tgg}
\mathbb{T}_{\widehat{m}}(z)=|\mathscr{R}_{\widehat{m}}|^{-1}\left(2z\cdot\left(\cfrac{d}{dz}\mathbb{F}_{\widehat{m}}(z)\right)-|\mathscr{I}^{\Sigma}_{\widehat{m}}|\cdot \mathbb{F}_{\widehat{m}}(z)\right),
\end{align}
where $\mathbb{F}_{\widehat{m}}(z)$ by Lemma~\ref{bij2} is given by
\begin{align}\label{fgg}
\mathbb{F}_{\widehat{m}}(z)=z^{k-s}\,F_{\ell}(z).
\end{align}
To analyze $\tau_{\widehat{m},1}$, we first expand $\mathbb{F}_{\widehat{m}}(z)$ at its dominant singularity $z=1/12$ as follows:
\begin{align}\label{tgg2}
\mathbb{F}_{\widehat{m}}(z)=\sum_{i\ge 0}\tilde{\kappa}_{\widehat{m},i}\,(1-12z)^{i/2}.
\end{align}
Using Equation (\ref{fgg}), we can now
compare the coefficients of the power series from Equations (\ref{tgg2}) and (\ref{hm1}).
Since $\kappa_{\ell,1}=0$, we have $\tilde{\kappa}_{\widehat{m},1}=0$. Furthermore,
\begin{align}\label{pss1}
\tilde{\kappa}_{\widehat{m},3}=12^{-k+s}\cdot \kappa_{\ell,3},
\end{align} 
where $\kappa_{\ell,3}$ was computed in (\ref{hm2}).

Next, we compare the coefficients of the power series from Equations (\ref{tme12}) and (\ref{tgg2})
and with help of Equation (\ref{tgg}) we obtain that
\begin{align*}
\tau_{\widehat{m},1}=\frac{-3\,\tilde{\kappa}_{\widehat{m},3}}{|\mathscr{R}_{\widehat{m}}|}.
\end{align*} 
Hence, we get that $\tau_{\widehat{m},1}\in \mathbb{Q}^-$ if and only if $\tilde{\kappa}_{\widehat{m},3}\in \mathbb{Q}^+$ and analogously by (\ref{pss1}), $\tilde{\kappa}_{\widehat{m},3}\in \mathbb{Q}^+$ if and only if $\kappa_{\ell,3}\in \mathbb{Q}^+$.
Furthermore, by (\ref{topo}), we know that $\kappa_{\ell,3}\in \mathbb{Q}^+$ if and only if $\xi_{\ell} \in \mathbb{Q}^+$.  
Finally, by applying Lemma~\ref{posi1}, we obtain that $\tau_{\widehat{m},1}\in \mathbb{Q}^-$.

\vspace{0cm}
\paragraph{\em Negativity and rationality of $\varrho_{\widehat{m},1}$ .}
We know that a pattern is a special case of submap. 
So we have the inequality
$s_{\widehat{m},n}\ge t_{\widehat{m},n}$ which implies that,
by comparing (\ref{tcc}) and (\ref{scc}), $\varrho_{\widehat{m},1}\le \tau_{\widehat{m},1}$. We already know that $\tau_{\widehat{m},1}$ is negative, thus $\varrho_{\widehat{m},1}$ is also negative.

By Proposition~\ref{soct}, we have
\begin{align*}
\mathbb{S}_{\widehat{m}}(z)=\mathbb{T}_{\widehat{m}}(z) \cdot
\prod_{i=1}^{|\mathscr{I}_{\widehat{m}}|}\displaystyle z^{-\Omega_i}F_{\Omega_i}(z).
\end{align*}
Using this formula to compare the coefficients of Equations (\ref{hm1}), (\ref{tme12}) and (\ref{sme12}),
we obtain the following equation:
\begin{align}\label{snn}
\varrho_{\widehat{m},1}=\tau_{\widehat{m},1}\,\prod_{i=1}^{|\mathscr{I}_{\widehat{m}}|}12^{\Omega_i}\,\kappa_{\Omega_i,0}.
\end{align} 
In a similar way in which we obtained Equation (\ref{hm2}) in the proof of Theorem~\ref{tgn}, we can also compute the value of $\kappa_{\ell,0}$:
\begin{align*}
\kappa_{\ell,0}=\frac{1}{12^{\ell}(\ell-1)!}\left(\frac{\partial^{\ell-1}}{\partial u^{\ell-1}}u^{\ell-1}a_0(u)\right)\Bigg|_{u=1},
\end{align*} 
where $a_0(u)$ is as in (\ref{a0ueue}).

Since $\tau_{\widehat{m},1}$ is rational and by (\ref{snn}), we know that $\varrho_{\widehat{m},1}$ is rational if $\kappa_{\ell,0}$ is rational. 
Let us now deal with the rationality of $\kappa_{\ell,0}$. 
Similarly to (\ref{pue}), we have
\[
\sqrt{3(u+2)(-5u+6)^3}=6\left(\sum_{i\ge 0}\binom{1/2}{i}\left(\frac{u}{2}\right)^i\right)
\cdot\left(\sum_{j\ge 0}\binom{3/2}{j}\left(-\frac{5u}{6}\right)^j\right).
\]
By showing that $a_0(u)$ has rational coefficients in its Taylor series expansion at $u=0$, we get that $\kappa_{\ell,0}$ is rational as well.
\end{proof}

\begin{remark}
The constants $c_2$ and $c'_2$ of Theorem~\ref{thmpt} and~\ref{thmsm} are computed in (\ref{tcc}) and (\ref{scc}) as follows:
\begin{align*}
c_2=\frac{3\,\tau_{\widehat{m},3}-7\,\tau_{\widehat{m},1}}{8},\quad \text{and} \quad
c'_2=\frac{3\,\varrho_{\widehat{m},3}-7\,\varrho_{\widehat{m},1}}{8},
\end{align*}
where $\tau_{\widehat{m},3}$ and $\varrho_{\widehat{m},3}$ can be computed by comparing coefficients as well. We have
\[
\tau_{\widehat{m},3}=|\mathscr{R}_{\widehat{m}}|^{-1}\left(-5\,\tilde{\kappa}_{\widehat{m},5}+\left(3-|\mathscr{I}^{\Sigma}_{\widehat{m}}|\right)\,\tilde{\kappa}_{\widehat{m},3}\right)
\]
and
\begin{align*}
\varrho_{\widehat{m},3}=&\,\tau_{\widehat{m},1}\left(\prod_{i=1}^{|\mathscr{I}_{\widehat{m}}|}12^{\Omega_i}\,\kappa_{\Omega_i,0}\right)
\cdot\left(\sum_{i=1}^{|\mathscr{I}_{\widehat{m}}|}\frac{\kappa_{\Omega_i,2}}{\kappa_{\Omega_i,0}}\right)
+\tau_{\widehat{m},3}\left(\prod_{i=1}^{|\mathscr{I}_{\widehat{m}}|}12^{\Omega_i}\,\kappa_{\Omega_i,0}\right).
\end{align*}
\end{remark}



\section{Example}\label{EX}

In this section, 
we give an example to make it easier for readers to understand how to apply the main results of this paper.

Let $\widehat{m}$ be a map with $V(\widehat{m})=\{A,B,C,D\}$, $E(\widehat{m})=\{\overline{AB},\overline{BC},\overline{CD},\overline{DA},\overline{AC}\}$ and $\overrightarrow{AB}$ is the root edge
(see $\widehat{m}$ in Figure~\ref{pat_exp}).
We first deal with $\widehat{m}$ occurring as a pattern in maps. By knowing that $|\mathscr{I}^{\Sigma}_{\widehat{m}}|$, which denotes the sum of valencies of inner-faces of $\widehat{m}$, is $6$ and
$|\mathscr{R}_{\widehat{m}}|$, which denotes the number of rotationally isomorphic maps of $\widehat{m}$, is $2$,
together with applying Proposition~\ref{poct}, we have
\[
\mathbb{T}_{\widehat{m}}(z)=\frac{1}{2}\left(2z\left(\cfrac{d}{dz}\mathbb{F}_{\widehat{m}}(z)\right)-6\mathbb{F}_{\widehat{m}}(z)\right),
\]
where one can apply
Lemma~\ref{bij2} and obtain that
$\mathbb{F}_{\widehat{m}}(z)=zF_4(z)$, which leads to
\[
\mathbb{T}_{\widehat{m}}(z)=z\left(\cfrac{d}{dz}zF_4(z)\right)-3zF_4(z).
\]
Then, by Lemma~\ref{lemfz}, we obtain the explicit result
\[
\mathbb{T}_{\widehat{m}}(z)=\frac{s(z)+t(z)\sqrt{1-12z}}{177147z^3},
\]
with
\begin{align*}
s(z)&:=-6804z^5-324z^4+1998z^3+36z^2-75z+5,\\
t(z)&:=486z^5-1458z^4-864z^3+144z^2+45z-5.
\end{align*}

If we are interested in the number of maps with $n$ edges and a marked pattern $\widehat{m}$,
we can do Taylor series expansion at $z = 0$ as follows:
\begin{align}\label{eqqt}
\mathbb{T}_{\widehat{m}}(z)=2z^5+42z^6+632z^7+8380z^8+\cdots,    
\end{align}
and $[z^n]\mathbb{T}_{\widehat{m}}(z)$ is the corresponding answer (see example in Figure~\ref{pat_exp} below and the Appendix).

\begin{center}
\begin{figure}[H]
\begin{minipage}{1\textwidth}
    \begin{center}
        {\includegraphics[width=0.5\textwidth]{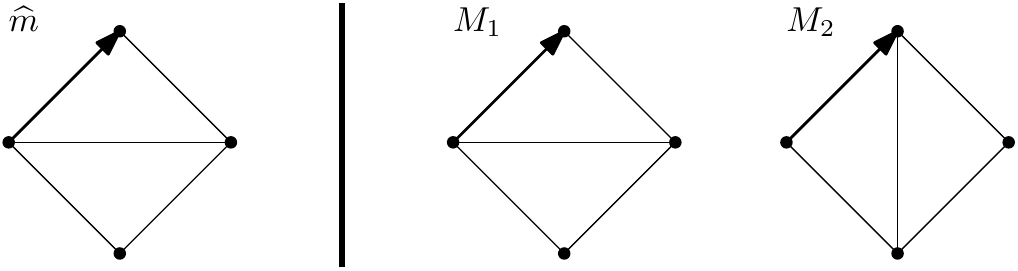}}
        \end{center}
\end{minipage}
\caption{$[z^5]\mathbb{T}_{\widehat{m}}(z)=2$ means that there are two maps ($M_1$ and $M_2$) with $5$ edges and a marked pattern $\widehat{m}$.}
\label{pat_exp}
\end{figure}
\end{center}

And if we further want to know the asymptotic behavior of $[z^n]\mathbb{T}_{\widehat{m}}(z)$, we can do 
 singular expansion of the function $\mathbb{T}_{\widehat{m}}(z)$
at its dominant singularity $1/12$,
\[
\mathbb{T}_{\widehat{m}}(z)=\frac{29}{26244}-\frac{419}{52488}(1-12z)^{1/2}+\frac{361}{13122}(1-12z)+\cdots,
\]
and by applying the Transfer Theorem, we have
\[
t_{{\widehat{m}},n}=[z^n]\mathbb{T}_{\widehat{m}}(z)\sim \frac{-419\, n^{-3/2}\,12^n}{52488\,\Gamma(-1/2)}.
\]
Thus, by comparing with (\ref{ma3b}), we obtain
\[
\mathbb{E}[t(\widehat{m},M_n)]=\frac{t_{{\widehat{m}},n}}{m_n}\sim \frac{419n}{209952}\simeq 0.002n,
\]
which satisfies the conclusion of Theorem~\ref{thmpt}.

Finally, as for the occurrences of $\widehat{m}$ as a submap in maps, 
by knowing that two inner faces of $\widehat{m}$ are both triangles, we set $\Omega_1=3$ and $\Omega_2=3$
and apply Proposition~\ref{soct}. This gives 
\[
\mathbb{S}_{\widehat{m}}(z)=\mathbb{T}_{\widehat{m}}(z)z^{-6}F_3^2,
\]
which has singular expansions at its dominant singularity $1/12$,
\[
\mathbb{S}_{\widehat{m}}(z)=\frac{118784}{4782969}-\frac{858112}{4782969}(1-12z)^{1/2}+\frac{641024}{4782969}(1-12z)+\cdots,
\]
and, again by the Tansfer Theorem, its asymptotic behavior is given as
\[
s_{{\widehat{m}},n}=[z^n]\mathbb{S}_{\widehat{m}}(z)\sim \frac{-858112\, n^{-3/2}\,12^n}{4782969\,\Gamma(-1/2)}.
\]
Finally, we know that the expected number of occurrences of $\widehat{m}$ as a submap in a map is 
\[
\mathbb{E}[s(\widehat{m},M_n)]=\frac{s_{{\widehat{m}},n}}{m_n}\sim \frac{214528n}{4782969}\simeq 0.045n,
\]
which satisfies the conclusion of Theorem~\ref{thmsm}.

\section*{Acknowledgment}

The author would like to thank Prof. Michael Drmota for guidance and a lot of precious input. He also thanks Prof. Michael Fuchs and Dr. Katarzyna Grygiel for their valuable comments on a previous version of this paper. The author was supported by the Austrian Science Fund FWF, Project F50-02 and F50-10 that are part of the SFB ``Algorithmic and Enumerative Combinatorics''.

\section*{Appendix}

In (\ref{eqqt}), $[z^6]\mathbb{T}_{\widehat{m}}(z)=42$ means there are 42 maps with $6$ edges and a marked pattern $\widehat{m}$. The following figure lists all cases of them where $\widehat{m}$ is painted red.
\begin{center}
\begin{figure}[H]
\begin{minipage}{1\textwidth}
    \begin{center}
        {\includegraphics[width=0.9\textwidth]{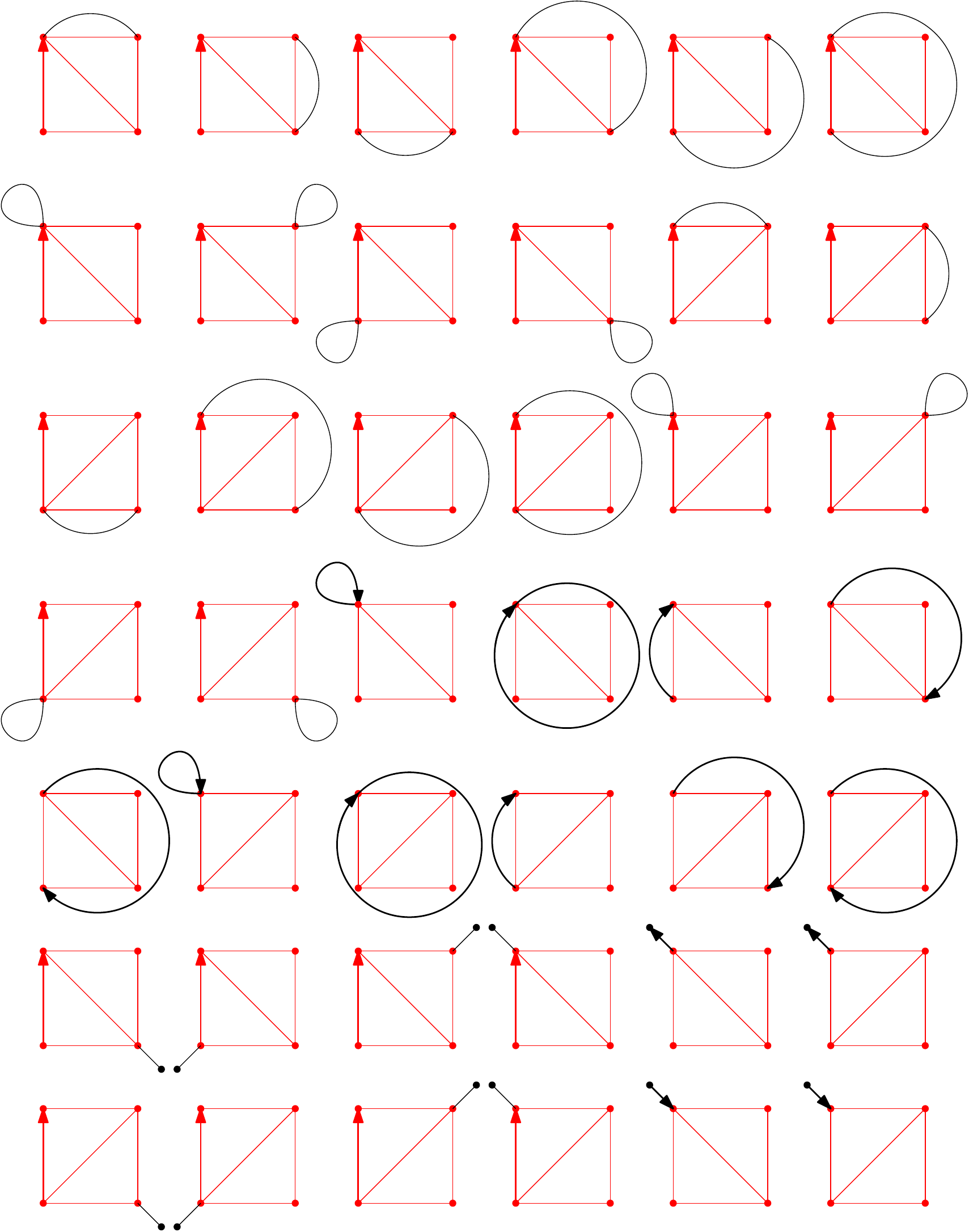}}
        \end{center}
\end{minipage}
\end{figure}
\end{center}
\end{document}